\documentclass[reqno]{amsart}

\usepackage[pagebackref]{hyperref}
\usepackage{amsmath,amssymb,amsthm,mathrsfs,cite}
\theoremstyle{plain}

\newtheorem{theorem}{Theorem}[section]
\newtheorem{lemma}[theorem]{Lemma}
\newtheorem{proposition}[theorem]{Proposition}
\newtheorem{corollary}[theorem]{Corollary}

\theoremstyle{definition}
\newtheorem{defn}[theorem]{Definition}
\newtheorem{example}[theorem]{Example}
\newtheorem{remark}[theorem]{Remark}

\numberwithin{equation}{section}

\allowdisplaybreaks

\begin{document}

\title[Gevrey Regularity and Compact Quantum Metric Spaces]{Gevrey Regularity and Compact Quantum Metric Spaces for $L^p$-Group Algebras}

\author{Lin Chen$^*$}
\address{Department of Mathematics and Statistics, Suzhou University of Technology,  Suzhou, $215500$, P. R. China}
\email{linchen198112@163.com}

\author{Qin Wang}
\address{Department of Mathematics,  East China Normal University,
Shanghai, $200241$,  P. R. China}
\email{qwang@math.ecnu.edu.cn}

\thanks{$^*$Corresponding author: Lin Chen}

\subjclass[2020]{Primary 46H15, 46H35, 46L89, 58B34;
	Secondary 43A15}

\keywords{ $L^p$-spectral triples,  strongly dense-core
	$\beta$-Gevrey regularity, $L^p$-group algebras, quantum metrics.}

\begin{abstract}
We introduce the 
$\beta$-Gevrey $\ell^p$-rapid decay
property $(GRD)_{\beta,p}$, for $0<\beta\le 1$ and $1\le p<\infty$,
for countable discrete groups. This property is a subexponential analogue of classical rapid decay, in which polynomial control is replaced by logarithmic subexponential control of order 
$o(R^\beta)$. We establish basic results for $(GRD)_{\beta,p}$.
We then apply this framework to compact quantum metric structures on reduced 
$L^p$-group algebras. We introduce strongly dense-core 
$\beta$-Gevrey regular $\ell^p$-spectral triples and give two class of examples. For countable discrete groups satisfying $(GRD)_{\beta,p}$, we prove, using Rieffel’s criterion, that the corresponding Gevrey seminorms induce metrics on the Banach-algebra state space which metrize the weak-$*$ topology. This yields compact quantum metric space structures in settings beyond classical rapid decay, including groups of intermediate growth such as the first Grigorchuk group.
\end{abstract}

\maketitle

\section{Introduction and preliminaries}

The theory of compact quantum metric spaces, initiated by Connes'
spectral approach to noncommutative geometry and developed systematically by Rieffel \cite{Conn, Rief1, Rief2, Rief3, Rief4}, provides a framework for studying noncommutative spaces from a metric point of view. In this
framework, a seminorm on a unital $C^*$-algebra plays the role of the
classical Lipschitz seminorm on the algebra of Lipschitz functions on a
compact metric space. When the corresponding Monge--Kantorovich metric on
the state space induces the weak-$*$ topology, the seminorm is called a
\emph{Lip-norm}, and the resulting pair is regarded as a \emph{compact quantum metric
space}. This theory has been further developed by Li \cite{li} and by
Latrémolière through quantum Gromov-Hausdorff-type frameworks
\cite{Latr, Latr1}.

Group $C^*$-algebras form an important class of examples in this
theory. Ozawa and Rieffel \cite{Ozawa} proved that, for groups satisfying
a Haagerup-type condition, including word-hyperbolic groups, the reduced
group $C^*$-algebras admit compact quantum metric space structures.
Another fundamental analytic tool in this setting is the \emph{rapid decay
property (RD)}, which relates the algebraic length of group elements to the
operator-norm behavior of convolution operators on $\ell^2(G)$. This
property was first established for the free group $\mathbb F_2$ by
Haagerup \cite{haa}, and its general theory was developed by Jolissaint
\cite{Jo1,Jo2}. For surveys, see Chatterji \cite{chat4} and Garncarek
\cite{luka2}.

Rapid decay property are particularly useful for controlling seminorms
associated with length functions and for verifying, under suitable
hypotheses, that such seminorms define Lip-norms on reduced group
$C^*$-algebras. Antonescu and Christensen \cite{Anto} constructed a
large class of Lip-norms for groups with the rapid decay property. Long and Wu \cite{long} treated analogous constructions in
the twisted setting for discrete groups with rapid decay property.
Related quantum groups with rapid decay property were studied by Bhowmick, Voigt and Zacharias \cite{Bhow}. For
finitely generated amenable groups,  rapid decay property is equivalent to
polynomial growth. Christ and Rieffel \cite{ChriRi} showed that, under a
strong polynomial growth condition on the length function, the associated
metric induces the weak-$*$ topology on the state space. Austad,
Kaad and Kyed \cite{Austad} recently studied quantum metric structures
for crossed products by groups of polynomial growth.

Compact quantum metric spaces are also closely related to Connes'
noncommutative  geometry through \emph{spectral triples}. Given a
spectral triple \ensuremath{(\mathcal A,\mathcal H,D)}, the formula
$L_D(a)=\|[D,a]\|$
defines a natural seminorm on the elements \ensuremath{a\in\mathcal A}
for which the commutator \ensuremath{[D,a]} is bounded. Such seminorms
often, but not automatically, give compact quantum metric space
structures. In the group case, spectral triples associated with length
functions were introduced by Connes \cite{Conn} and remain a central source of
examples connecting group geometry, operator algebras, and noncommutative
metric geometry.

This naturally leads to the question of whether compact quantum metric
space structures arising from spectral triples admit analogues beyond the
Hilbert space and $C^*$-algebraic setting, in particular for
\emph{$L^p$-operator algebras}. For $p\in[1,\infty)$, an
$L^p$-operator algebra is a Banach algebra admitting an isometric
representation on some $L^p$-space. Such algebras were first considered by
Herz \cite{Herz}, who studied the $L^p$-operator algebra generated by the
left regular representation of a locally compact group.
Their systematic study was revived by Phillips \cite{Phi1, Phi2, Phi3} in the 2010s as part of a
program aimed at extending aspects of the modern theory of
$C^*$-algebras to the $L^p$ setting. Since then, $L^p$-operator algebras
have attracted considerable attention; see, for example,
\cite{Choi, Gard1, Gardella, Gard3}.

Classically, Gevrey regularity refines smoothness by imposing
quantitative bounds on derivatives of all orders. More precisely, let
$\Omega\subseteq\mathbb R^d$ be an open set and let $\sigma\geq 1$.
A function $u\in C^\infty(\Omega)$ is said to belong to the \emph{Gevrey class
of order $\sigma$} if, for every compact set $K\subset\Omega$, there
exist constants $C,R>0$ such that
$$\sup_{x\in K}|\partial^\alpha u(x)|
\leq C R^{|\alpha|}(\alpha!)^\sigma,
\qquad \alpha\in\mathbb N^d.$$
These classes were introduced by Maurice Gevrey in his 1918 work on
partial differential equations~\cite{Gev}.
This perspective admits an abstract Banach-algebraic analogue. For a
Banach algebra equipped with a closed derivation, ultradifferentiable and
Gevrey-type regularity can be expressed through growth estimates for the
iterates of the derivation. This point of view is closely related to the
Dales--Davie Banach algebra constructions of~\cite{DalesDavie1973} and
to the theory of inverse-closed smooth Banach subalgebras; see
\cite{Gr,Grk1,Gr2,Gr3,Grk5,Klo} and the references therein.

The Grigorchuk group was introduced by Grigorchuk as the first example of a finitely generated group of intermediate growth~\cite{Grig}. It is
amenable, but it does not have the rapid decay property with respect to a word-length function. Hence compact quantum metric space constructions based on rapid decay do not directly apply to such intermediate-growth
groups. Recently, Liao and Yu introduced property $(RD)_p$ and used it to prove that the $K$-theory of reduced $L^p$-operator algebras is
independent of the exponent $p\in[1,\infty)$~\cite{Liao}. Relatedly, the subexponential decay property (SD) for countable discrete groups has been studied in~\cite{Sri}. Motivated by these developments, we introduce the
\emph{$\beta$-Gevrey $\ell^p$-rapid decay property}, for
$0<\beta\leq 1$, denoted by $(GRD)_{\beta,p}$; see
Definition~\ref{le111}. Unlike the rapid decay property, which imposes
polynomial control, $(GRD)_{\beta,p}$ requires that, for functions
supported in balls of radius $R$, the operator norm of the left regular
representation on $\ell^p(G)$ be controlled by the $\ell^p$-norm up to a
factor whose logarithmic growth is $o(R^\beta)$. 

It is necessary to explain the choice of exponent $\beta$ in
our definition of $(GRD)_{\beta,p}$. For the Gevrey defining sequence
$M_k=(k!)^r$,
the associated function
$T_M(u)=\sup_{k\geq 0}\frac{u^k}{M_k}$
has growth equivalent to $\exp(cu^{1/r})$; see
Klotz~\cite[Section~3.2]{Klo}. Hence a factorial bound of order $(n!)^r$ corresponds to the
stretched-exponential scale $\exp(cu^{1/r})$, and the same scale appears
as $\exp(-cu^{1/r})$ when used as a decay estimate away from the
diagonal \cite{Gr2}. Taking $r=1/\beta$ gives the Gevrey bound $(n!)^{1/\beta}$ and the
decay scale $\exp(-cu^\beta)$. This is precisely the scale needed to
dominate volume growth satisfying
$\log F(R)=o(R^\beta)$.
Indeed, this condition implies that, for every $\varepsilon>0$,
we have $F(R)\leq \exp(\varepsilon R^\beta)$ for all sufficiently large
$R$. Thus $\exp(-cR^\beta)$ dominates the volume growth whenever
$\varepsilon<c$. In this sense, $\beta$ is the  decay exponent
dual to the Gevrey order $1/\beta$. The relationship between factorial bounds on derivatives and
subexponential decay is reflected in~\cite{Dasgup}, and also appears in our
work~\cite{chen}.

We first investigate the basic properties of groups satisfying
$(GRD)_{\beta,p}$. The main comparison results are summarized in
Table~\ref{tab:grd-implications}. We also establish stability under direct products (see Proposition \ref{stable}) and subgroups (see Proposition \ref{subgroup-stable}). 
 
\begin{table}[htbp]
	\centering
	\renewcommand{\arraystretch}{1.4}
	\begin{tabular}{|c|}
		\hline
		\textbf{Implication} \\
		\hline
		
		$(RD)_p \Longrightarrow (GRD)_{\beta,p}$ for all $0<\beta\leq 1$ (Proposition \ref{prop35})
		\\
		\hline
		
		$RD=(RD)_2 \Longrightarrow (GRD)_{\beta,2}$ for all $0<\beta\leq 1$
		\\
		\hline
		
		$\log |B_R|=o(R^\alpha) \ \text{for some} \ 0<\alpha\le 1
		\Longrightarrow
		(GRD)_{\beta,p}$
		for all $\alpha\leq\beta\leq 1$ \\ and $1\leq p<\infty$ (Remark  \ref{re34})
		\\
		\hline
		
		$SD\Longrightarrow (GRD)_{1,2}$ (Remark  \ref{re34})
		\\
		\hline
		
		amenable +$(GRD)_{\beta, p}$  for some $p\in(1,\infty)$ and $0<\beta\leq 1 \Longrightarrow
		\log |B_R|=o(R^\beta)$ \\(Proposition \ref{prop37})
		\\
		\hline
	\end{tabular}
	\caption{Implication relations for $(GRD)_{\beta,p}$.}
	\label{tab:grd-implications}
\end{table}

Recently, Delfin, Farsi, and Packer \cite{Del} constructed
\emph{$L^p$-spectral triples} for reduced $L^p$-group algebras of countable
discrete groups with proper length functions. They further showed that,
under a bounded doubling condition on the length function, these
$L^p$-spectral triples are metric. Building on this work, we adopt a slightly modified definition of an
$L^p$-spectral triple; see Definition~\ref{def43}. This formulation differs slightly from 
\cite[Definition 3.2]{Del} and fits
naturally with the standard theory of closed operators with compact
resolvent on Banach spaces. It is based on the common structural core
underlying the usual definitions of spectral triples on Hilbert spaces. Moreover, we introduce \emph{strongly dense-core
$\beta$-Gevrey regular $L^p$-spectral triples}; see
Definition~\ref{def35}. The condition in this definition requires the existence of a norm-dense subalgebra on which the derivation determined by the spectral triple can be iterated to all orders, with the norms of the resulting iterates satisfying Gevrey-type factorial estimates for every positive choice of the parameter. Our second main results provide two classes of
examples satisfying this definition.

\bigskip 

{\it{ \noindent \bf Example A} (cf. Theorem~\ref{th1}).
Let $1\leq p<\infty$. Consider the algebra $\mathcal A=C(\mathbb T)$ and the Banach space $L^p(\mu)=L^p(\mathbb T)$, where
$\mathbb T=\mathbb R/2\pi\mathbb Z$ is equipped with the normalized
Lebesgue measure $\mu=dx/(2\pi)$. Let $C(\mathbb T)$ act on $L^p(\mathbb T)$ via the multiplication operator
$M_a\xi=a\xi$, for $a\in C(\mathbb T)$ and $\xi\in L^p(\mathbb T)$.
Let $D = -i \frac{d}{dx}$  be the differential operator. Then, the domain of $D$ is 
\begin{align*}
\operatorname{Dom}(D) 
&=W^{1,p}(\mathbb T)\\
&=\{f\in L^p(0,2\pi):f(0)=f(2\pi), f\text{is absolutely continuous}, f^\prime\in L^p(0,2\pi)\}
\end{align*}
Moreover, for every $\beta>0$, $(C(\mathbb{T}), L^p(\mathbb{T}), D)$ is a strongly  dense-core $\beta$-Gevrey regular $L^p$-spectral triple.}

\medskip

{\it{ \noindent \bf Example B} (cf. Theorem~\ref{pro5}). 
Let $1\le p<\infty$, and let $G$ be a countable discrete group equipped
with a proper length function $\ell:G\to[0,\infty)$. Let	$D_\ell:\operatorname{Dom}(D_\ell)\subseteq \ell^p(G)\to \ell^p(G)$ be
the multiplication operator $$(D_\ell\xi)(x)=\ell(x)\xi(x),$$	
with domain
$$\operatorname{Dom}(D_\ell)=\{\xi\in \ell^p(G):\ell\xi\in \ell^p(G)\}.$$
Then, for every $\beta>0$, the triple
$(F_r^p(G),\ell^p(G),D_\ell)$ is a strongly dense-core
$\beta$-Gevrey regular $L^p$-spectral triple, with derivation
$\delta_\ell(\cdot)=[D_\ell,\cdot]$.}

\bigskip

The property $(GRD)_{\beta,p}$ extends the reach of the classical rapid
decay framework while retaining enough analytic control to yield compact
quantum metric space structures. Our third main result shows that, for
any  countable discrete group $G$ satisfying $(GRD)_{\beta,p}$, the strongly dense-core
$\beta$-Gevrey regular $L^p$-spectral triple
 $(F_r^p(G),\ell^p(G),D_\ell)$ of Example B induces such a structure.

\medskip

{\it{ \noindent \bf Theorem C} (cf. Theorem~\ref{main}).
For $p\in[1,\infty)$, let $G$ be a countable discrete group
equipped with a proper length function $\ell$, and let
$(F_r^p(G),\ell^p(G),D_\ell)$ be the associated strongly dense-core
$\beta$-Gevrey regular $L^p$-spectral triple. Assume that $G$ satisfies
$(GRD)_{\beta,p}$ for some $0<\beta\le 1$. Then, for every $C>0$, the
image of
	$$\mathcal B_{\beta,C}=\{a\in\lambda_p(\mathbb C[G]):L_{\beta,C}(a)\leq 1\}$$
in $F_r^p(G)/\mathbb C1$ is totally bounded with respect to the quotient
norm. Consequently, the metric
$$d_{\beta,C}(\omega,\psi)=\sup\{|\omega(a)-\psi(a)|:a\in\lambda_p(\mathbb C[G]),L_{\beta,C}(a)\leq 1\}$$	
metrizes the weak-$*$ topology on $S(F_r^p(G))$.}

\medskip

Combining Theorem C with Definition~\ref{de43}, we obtain, whenever
$G$ satisfies $(GRD)_{\beta,p}$ with respect to $\ell$, the associated
$L^p$-spectral triple $(F_r^p(G),\ell^p(G),D_\ell)$ is strongly $\beta$-metric; see
Corollary~\ref{maincro}. Our proof follows Rieffel's approach to compact quantum metric spaces. In contrast to Connes' spectral-triple perspective, Rieffel's construction
begins with metrics on state-like spaces associated with normed algebras.
Moreover, Rieffel's original formulation in \cite{Rief1} was given in
the general setting of normed spaces and did not rely on a $C^*$-algebra
structure. This flexibility makes Rieffel's framework particularly well
suited for studying compact quantum metric space structures in the
$L^p$-operator algebraic setting. The main technical point is to verify
the hypotheses of Rieffel's criterion \cite[Theorem~1.8]{Rief1}.

\bigskip

The paper is organized as follows. Section 2 recalls the basic definitions
 needed throughout the paper. Section 3 establishes
the basic properties of groups satisfying $(GRD)_{\beta,p}$. 
Section 4, we introduce strongly dense-core $\beta$-Gevrey regular
$L^p$-spectral triples and prove Example A and Example B. Section 5 proves
Theorem C by applying Rieffel's theorem, thereby showing that, when the
underlying group satisfies $(GRD)_{\beta,p}$, the associated strongly
dense-core $\beta$-Gevrey regular $L^p$-spectral triple is strongly
$\beta$-metric. Finally, Section 6 presents several examples.

\section{Preliminaries}
In this section we review some basic definitions.
\begin{defn}
Let $p\in[1,\infty)$, let $(X,\mathcal M,\mu)$ be a measure space, and let
$A$ be a Banach algebra. A \emph{representation} of $A$ on $L^p(\mu)$ is a
continuous algebra homomorphism
	$$\varphi:A\to\mathcal B(L^p(\mu)).$$
We say that $A$ is \emph{representable on $L^p(\mu)$} if there exists an
isometric representation
	$$\varphi:A\to\mathcal B(L^p(\mu)).$$
Such a representation is called \emph{$\sigma$-finite} if
$(X,\mathcal M,\mu)$ is $\sigma$-finite. We say that $A$ is
\emph{$\sigma$-finitely representable} if it admits a $\sigma$-finite
isometric representation.
\end{defn}
\begin{defn}
Let $p\in [1,\infty)$. A Banach algebra $A$ is an \emph{$L^p$-operator algebra}
if there is a measure space $(X,\mathcal M,\mu)$ such that $A$ is
representable on $L^p(\mu)$.
\end{defn}
Let $G$ be a countable discrete group and let $p\in[1,\infty)$.
For $f=(f(x))_{x\in G}\in \ell^1(G)$ and
$\xi=(\xi(x))_{x\in G}\in \ell^p(G)$, define $f*\xi:G\to\mathbb C$ by
$$(f*\xi)(x)=\sum_{y\in G} f(y)\xi(y^{-1}x),\qquad x\in G.$$
It is standard to check that $f*\xi\in \ell^p(G)$ and
$\|f*\xi\|_p\le \|f\|_1\|\xi\|_p$.
Thus we obtain the left regular representation
$$\lambda_p:\ell^1(G)\to B(\ell^p(G)),\qquad\lambda_p(f)\xi=f*\xi.$$
The \emph{reduced $L^p$ operator group algebra} of $G$ is defined by
$$F_r^p(G)=\overline{\lambda_p(\ell^1(G))}^{\,B(\ell^p(G))}.$$
Then $F_r^p(G)$ is a unital Banach subalgebra of $B(\ell^p(G))$,
with unit $\lambda_p(\delta_e)$, where  $e$ the identity element of $G$.
In particular, $F_r^p(G)$ is an $L^p$-operator algebra in its own right.
Moreover,  $\lambda_p(C_c(G))$  is a dense subalgebra of $F_r^p(G)$.
Let us recall the notion of a length function on a given group $G$..
\begin{defn}
A function $\ell:G\to \mathbb{R}_{+}$ is called a
\emph{length function} if it satisfies the following conditions:
	\begin{enumerate}
		\item[(i)] $\ell(x)=0$ if and only if $x=e$;
		\item[(ii)] $\ell(xy)\leq \ell(x)+\ell(y)$ for all $x,y\in G$;
		\item[(iii)] $\ell(x^{-1})=\ell(x)$ for all $x\in G$.
	\end{enumerate}
The length function $\ell$ is called \emph{proper} if
$\ell^{-1}([0,c])$ is finite for every $c\geq 0$.
\end{defn}

In analogy with the classical definition of rapid decay, we introduce the following definition.
\begin{defn}\label{le111}
Let $G$ be a countable discrete group equipped with a proper length function
$\ell:G\to[0,\infty)$. For $R\geq 0$, set $$B_R=\{x\in G:\ell(x)\leq R\}.$$
Let $p\in[1,\infty)$ and $0<\beta\leq 1$. We say that $G$ has the
\emph{$\beta$-Gevrey $\ell^p$-rapid decay property} with respect to $\ell$,
denoted by $(GRD)_{\beta,p}$, if there exists a function
$F:\mathbb R_{\geq 0}\to[1,\infty)$ such that
$$\log F(R)=o(R^\beta)\qquad\text{as} \ R\to\infty,$$
and such that, for every $R\geq 0$ and every $f\in\mathbb C[G]$ with
$\operatorname{supp}f\subseteq B_R$, we have
$$\|\lambda_p(f)\|_{B(\ell^p(G))}\leq F(R)\|f\|_{\ell^p(G)}.$$
Here $\lambda_p(f)$ denotes the left  regular representation on $\ell^p(G)$.
\end{defn}

\section{$\beta$-Gevrey $p$-rapid decay property}
In this section, we collect some basic results on countable discrete
groups with the $\beta$-Gevrey $p$-rapid decay property. Throughout the
section, for a finite set $E$, we denote its cardinality by $|E|$.
We first show that, 
for finitely generated groups, $(GRD)_{\beta,p}$ with respect to a word length
does not depend on the chosen finite generating set.

\begin{lemma}\label{lem-GRD-independent-word-length}
Let $\ell_1$ and $\ell_2$ be proper length functions on a countable discrete
group $G$ such that, for some $A\geq 1$ and $B\geq 0$,
	$$\ell_i(g)\leq A\ell_j(g)+B$$
for all $g\in G$ and $\{i,j\}=\{1,2\}$. Then $(GRD)_{\beta,p}$ with respect to
$\ell_1$ is equivalent to $(GRD)_{\beta,p}$ with respect to $\ell_2$, for all
$p\in[1,\infty)$ and $0<\beta\leq 1$.
\end{lemma}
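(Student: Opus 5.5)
By symmetry it suffices to show that $(GRD)_{\beta,p}$ with respect to $\ell_1$ implies $(GRD)_{\beta,p}$ with respect to $\ell_2$. The plan is to compare balls: if $F_1$ is the control function for $\ell_1$, define $F_2(R)=F_1(AR+B)$ and check that it works for $\ell_2$.

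First, the balls are nested. Write $B^{(i)}_R=\{x\in G:\ell_i(x)\le R\}$. If $\ell_2(x)\le R$, the hypothesis (with $i=1$, $j=2$) gives $\ell_1(x)\le A\ell_2(x)+B\le AR+B$. Hence
$$B^{(2)}_R\subseteq B^{(1)}_{AR+B}.$$
So any $f\in\mathbb C[G]$ with $\operatorname{supp}f\subseteq B^{(2)}_R$ is supported in $B^{(1)}_{AR+B}$. Applying $(GRD)_{\beta,p}$ for $\ell_1$ then gives
$$\|\lambda_p(f)\|\le F_1(AR+B)\|f\|_{\ell^p(G)}=F_2(R)\|f\|_{\ell^p(G)}.$$
The estimate itself is immediate. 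Also $F_2$ takes values in $[1,\infty)$ because $F_1$ does.

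The only real point is the growth condition $\log F_2(R)=o(R^\beta)$. Fix $\varepsilon>0$. Since $\log F_1(S)=o(S^\beta)$, there is $S_0$ such that $\log F_1(S)\le\varepsilon S^\beta$ for all $S\ge S_0$. For $R\ge S_0/A$ we have $S=AR+B\ge S_0$, so
$$\log F_2(R)\le \varepsilon (AR+B)^\beta.$$
For $R\ge B/A$ we have $AR+B\le 2AR$, hence $(AR+B)^\beta\le (2A)^\beta R^\beta$. Therefore $\log F_2(R)\le \varepsilon(2A)^\beta R^\beta$ for all $R\ge\max(S_0,B)/A$. Since $\varepsilon$ is arbitrary and $(2A)^\beta$ is a fixed constant, $\log F_2(R)=o(R^\beta)$.

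This growth step is the main thing to verify, and it only uses that $o(S^\beta)$ is preserved under an affine change of variable $S=AR+B$ with $A>0$. The argument with $\ell_1$ and $\ell_2$ interchanged is identical. This proves the lemma, which in particular gives independence of the choice of finite generating set for word lengths, since any two word lengths are bi-Lipschitz equivalent.
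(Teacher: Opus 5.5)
Your proposal is correct and follows the paper's proof: you use the nesting $B^{(2)}_R\subseteq B^{(1)}_{AR+B}$, set $F_2(R)=F_1(AR+B)$, and invoke symmetry for the converse. Your explicit $\varepsilon$-verification that $\log F_1(AR+B)=o(R^\beta)$ fills in the step the paper compresses into a single line.
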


\begin{proof}
If $(GRD)_{\beta,p}$ holds for $\ell_2$ with control function $F_2$, then
$$B_R^{\ell_1}\subseteq B_{AR+B}^{\ell_2}.$$
Thus $F_1(R)=F_2(AR+B)$ is a control function for $\ell_1$, since
	$$\log F_1(R)=\log F_2(AR+B)=o((AR+B)^\beta)=o(R^\beta).$$
The converse is identical.
\end{proof}

\begin{lemma}\label{monotone}
Let $G$ be a countable discrete group equipped with a proper length function $\ell$.
Let $p\in[1,\infty)$ and $0<\beta_1\leq\beta_2\leq 1$. If $G$ has
$(GRD)_{\beta_1,p}$ with respect to $\ell$, then $G$ has
$(GRD)_{\beta_2,p}$ with respect to $\ell$.
\end{lemma}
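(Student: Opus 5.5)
The plan is to use the same control function: if $F$ witnesses $(GRD)_{\beta_1,p}$, then $F$ also witnesses $(GRD)_{\beta_2,p}$. The norm inequality
$$\|\lambda_p(f)\|_{B(\ell^p(G))}\leq F(R)\|f\|_{\ell^p(G)}$$
for $\operatorname{supp} f\subseteq B_R$ does not involve $\beta$ at all. So the only thing to check is the growth condition: $\log F(R)=o(R^{\beta_1})$ should imply $\log F(R)=o(R^{\beta_2})$.

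To see this, fix $\varepsilon>0$. By hypothesis there is $R_0\geq 1$ with $|\log F(R)|\leq \varepsilon R^{\beta_1}$ for all $R\geq R_0$. Since $F\geq 1$, we in fact have $\log F(R)\geq 0$.

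For $R\geq 1$ and $\beta_1\leq\beta_2$ we have $R^{\beta_1}\leq R^{\beta_2}$. Hence $0\le \log F(R)\leq \varepsilon R^{\beta_2}$ for all $R\geq R_0$. This is exactly $\log F(R)=o(R^{\beta_2})$ as $R\to\infty$.

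There is no genuine obstacle. The only point needing care is to restrict to $R\geq 1$, so that the power comparison goes in the right direction. This is harmless because the little-$o$ condition concerns only $R\to\infty$, and the norm inequality is required for all $R\ge0$ with the same $F$, which remains unchanged.
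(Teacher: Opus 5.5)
Your proposal is correct and is essentially the paper's proof: both keep the same control function $F$ and deduce $\log F(R)=o(R^{\beta_2})$ from $\log F(R)=o(R^{\beta_1})$. The paper writes this as $\frac{\log F(R)}{R^{\beta_2}}=\frac{\log F(R)}{R^{\beta_1}}\cdot R^{-(\beta_2-\beta_1)}$, where the second factor is bounded for $R\ge 1$; your $\varepsilon$-version with $R^{\beta_1}\le R^{\beta_2}$ is the same argument.
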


\begin{proof}
Let $F$ be a control function for $(GRD)_{\beta_1,p}$. Then for every
$R\geq 0$ and every $f\in\mathbb C[G]$ with
$\operatorname{supp}(f)\subseteq B_R$, we have
$$\|\lambda_p(f)\|_{\mathcal B(\ell^p(G))}\leq F(R)\|f\|_{\ell^p(G)}.$$
Moreover,
$\log F(R)=o(R^{\beta_1})$ as $R\to\infty$.
Since $\beta_1\leq\beta_2$, we have
	$$\frac{\log F(R)}{R^{\beta_2}}=\frac{\log F(R)}{R^{\beta_1}}\cdot\frac{1}{R^{\beta_2-\beta_1}}
	\longrightarrow 0.$$
Thus $\log F(R)=o(R^{\beta_2})$.
Hence the same function $F$ is also a control function for
$(GRD)_{\beta_2,p}$.
\end{proof}

\begin{proposition}\label{pro21}
Let $G$ be a countable discrete group equipped with a proper length function
$\ell$. Assume that, for some $0<\beta\leq 1$,
$\log |B_R|=o(R^\beta)$ as $R\to\infty$.
Then $G$ has $(GRD)_{\beta,p}$ with respect to $\ell$ for every
$p\in[1,\infty)$. 
\end{proposition}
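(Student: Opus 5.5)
The plan is to take the control function $F(R)=|B_R|^{c}$ for a suitable exponent $c$ depending only on $p$, or simply $F(R)=\max(1,|B_R|)$. Then $\log F(R)=c\log|B_R|=o(R^\beta)$ follows directly from the hypothesis. Note that $|B_R|\ge 1$ because $e\in B_R$, so $F$ takes values in $[1,\infty)$ as Definition~\ref{le111} requires.

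The key estimate is elementary. Let $f\in\mathbb C[G]$ with $\operatorname{supp}f\subseteq B_R$. The basic bound recalled in the preliminaries gives
\[
\|\lambda_p(f)\|_{B(\ell^p(G))}\le \|f\|_{\ell^1(G)}.
\]
By Hölder's inequality on the finite set $B_R$, with conjugate exponent $q$ satisfying $1/p+1/q=1$,
\[
\|f\|_{1}=\sum_{x\in B_R}|f(x)|\le |B_R|^{1/q}\Big(\sum_{x\in B_R}|f(x)|^p\Big)^{1/p}=|B_R|^{1-1/p}\|f\|_{p}.
\]
In the case $p=1$ this is just an equality with factor $1$. Hence
\[
\|\lambda_p(f)\|\le |B_R|^{1-1/p}\|f\|_p\le |B_R|\,\|f\|_p,
\]
and we may take $F(R)=|B_R|^{1-1/p}$, or for uniformity $F(R)=|B_R|$.

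It remains to check the growth condition. We have $\log F(R)=(1-1/p)\log|B_R|\le\log|B_R|$, which is $o(R^\beta)$ by hypothesis. Properness of $\ell$ guarantees that each $B_R$ is finite, so $F$ is well defined. The proof is a single Hölder step, and there is no genuine obstacle. The only care needed is to confirm that the inequality holds for every $R\ge 0$, including small $R$, which it does since the bound is pointwise in $R$ and not asymptotic.
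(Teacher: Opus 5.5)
Your proposal is correct and follows the paper's proof: you bound $\|\lambda_p(f)\|\le\|f\|_1$, apply H\"older on the finite ball to get $\|f\|_1\le|B_R|^{1-1/p}\|f\|_p$, and take $F(R)=|B_R|^{1-1/p}$, so that $\log F(R)=(1-1/p)\log|B_R|=o(R^\beta)$. You also note that $e\in B_R$ forces $|B_R|\ge 1$, so $F$ takes values in $[1,\infty)$ as the definition requires; the paper leaves this point implicit.
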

\begin{proof}
Let $f \in \mathbb C[G]$  be finitely supported with $\text{supp}(f) \subseteq B_R$. By Hölder's inequality, we have
$$\|\lambda_p(f)\|_{B(\ell^p(G))} \leq \|f\|_{\ell^1(G)} = \sum_{g \in B_R} |f(g)| \le |B_R|^{1 - \frac{1}{p}} \|f\|_{\ell^p}. $$
Set $F_p(R)=|B_R|^{1-\frac1p}$.
Then
$$\log F_p(R)=\left(1-\frac1p\right)\log |B_R|=o(R^\beta)$$
as $R\to\infty$. Therefore $G$ satisfies $(GRD)_{\beta,p}$ with respect to
$\ell$.
\end{proof}

\begin{remark}\label{re34}
Let $G$ be a finitely generated group equipped with a word length $\ell$.
If $G$ has subexponential growth, that is,
$\log |B_R|=o(R)$,
then, by taking $\beta=1$ in Proposition~\ref{pro21}, $G$ satisfies
$(GRD)_{1,p}$ with respect to $\ell$ for every $p\in[1,\infty)$. In \cite{Sri}, the authors introduced the property $SD$, that is,  there exists a subexponential function $F$ such that, for every
$f\in\mathbb C[G]$ with supported in the $B_R$, 
$$\|\lambda_2(f)\|_{B(\ell^2(G))}\le F(R)\|f\|_{\ell^2(G)}.$$
Since $F$ is subexponential,
$\log F(R)=o(R)$.
Thus $G$ satisfies $(GRD)_{1,2}$.
More generally, if $\log |B_R|=o(R^\alpha)$
for some $0<\alpha\leq 1$, then Proposition~\ref{pro21} implies that
$G$ satisfies $(GRD)_{\alpha,p}$ with respect to $\ell$ for every
$p\in[1,\infty)$. Hence, by Proposition~\ref{monotone}, $G$ satisfies
$(GRD)_{\beta,p}$ with respect to $\ell$ for every
$\alpha\leq \beta\leq 1$ and every $p\in[1,\infty)$.
In view of Lemma~\ref{lem-GRD-independent-word-length}, for finitely generated
groups this property is independent of the chosen finite generating set.
\end{remark}

In \cite{Liao}, the authors introduced the property $(RD)_p$. More precisely,
a countable discrete group $G$ equipped with a length function $\ell$ is said
to have property $(RD)_p$ if there exist constants $C>0$ and $s\geq 0$ such
that, for every $R\geq 0$ and every $f\in\mathbb C[G]$ with
$\operatorname{supp}(f)\subseteq B_R$, we have
$$\|\lambda_p(f)\|_{\mathcal B(\ell^p(G))}\leq C(1+R)^s\|f\|_{\ell^p(G)}.$$
The following proposition shows that $(RD)_p$ implies $(GRD)_{\beta,p}$ for
all $0<\beta\leq 1$.

\begin{proposition}\label{prop35}
Let $G$ be a countable discrete group equipped with a proper length function $\ell$.
If $G$ satisfies $(RD)_p$, then $G$ satisfies $(GRD)_{\beta,p}$ with respect
to $\ell$ for every $0<\beta\leq 1$.
\end{proposition}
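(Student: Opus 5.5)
The plan is to use the $(RD)_p$ bound itself as the control function in Definition~\ref{le111}. Fix constants $C>0$ and $s\ge 0$ as in the definition of $(RD)_p$, and set
$$F(R)=\max\{1,\,C(1+R)^s\},\qquad R\ge 0.$$
Taking the maximum with $1$ ensures that $F$ takes values in $[1,\infty)$, as Definition~\ref{le111} requires. Since $F(R)\ge C(1+R)^s$, the $(RD)_p$ inequality gives, for every $R\ge 0$ and every $f\in\mathbb C[G]$ with $\operatorname{supp}f\subseteq B_R$,
$$\|\lambda_p(f)\|_{B(\ell^p(G))}\le C(1+R)^s\|f\|_{\ell^p(G)}\le F(R)\|f\|_{\ell^p(G)}.$$
This is exactly the operator-norm estimate required in Definition~\ref{le111}.

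It remains to check the growth condition $\log F(R)=o(R^\beta)$ for every $0<\beta\le 1$. For $R$ large enough that $C(1+R)^s\ge 1$, we have
$$\log F(R)=\log C+s\log(1+R).$$
If $C<1$ and $s=0$, then $F\equiv 1$ and there is nothing to prove. Otherwise, for $R\ge 1$,
$$\frac{\log F(R)}{R^\beta}\le \frac{|\log C|+s\log(1+R)}{R^\beta}\longrightarrow 0,$$
because $\log(1+R)=o(R^\beta)$ for every fixed $\beta>0$. One way to see this is L'Hôpital's rule, or the bound $\log(1+R)\le \frac{2}{\beta}(1+R)^{\beta/2}$.

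No step here is a real obstacle; the proof is bookkeeping. The only points needing care are, first, the normalization $F\ge 1$ (handled by the maximum), and second, the requirement that the length function be proper. Properness is already assumed in the statement and is needed so that $B_R$ is finite, which matches the setting of Definition~\ref{le111}. It is also worth remarking that the argument gives $(GRD)_{\beta,p}$ directly for each $\beta$, so Lemma~\ref{monotone} is not needed. Alternatively, one could prove the claim for small $\beta$ and then deduce the remaining cases from Lemma~\ref{monotone}.
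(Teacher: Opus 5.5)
Your proof is correct and is essentially the paper's argument: you take the $(RD)_p$ bound $C(1+R)^s$ as the control function and observe that $\log C+s\log(1+R)=o(R^\beta)$ for every $\beta>0$. Your $\max\{1,\cdot\}$ safeguard is harmless but unnecessary, because applying the $(RD)_p$ inequality to $f=\delta_e$ with $R=0$ (where $B_0=\{e\}$) gives $1=\|\lambda_p(\delta_e)\|\le C$, so $C(1+R)^s\ge 1$ holds automatically.
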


\begin{proof}
Let $C>0$ and $s\geq 0$ be constants as above. For every
$f\in\mathbb C[G]$ with $\operatorname{supp}(f)\subseteq B_R$, we have	$$\|\lambda_p(f)\|_{\mathcal B(\ell^p(G))}\leq C(1+R)^s\|f\|_{\ell^p(G)}.$$
Set $F(R)=C(1+R)^s$.
Then, for every $0<\beta\leq 1$,
$$\log F(R)=\log C+s\log(1+R)=o(R^\beta).$$
Thus $F$ is a control function for $(GRD)_{\beta,p}$. Hence $G$ satisfies
$(GRD)_{\beta,p}$ with respect to $\ell$ for every $0<\beta\leq 1$.
\end{proof}

The following proposition is analogous to \cite[Proposition 4.3]{Liao}.
It suggests that the notion of $(GRD)_{\beta,p}$ is most meaningful in the
range $p\in(1,2]$.

\begin{proposition}\label{pro-growth-grd}
Let $G$ be a countable discrete group equipped with a proper length function
$\ell$. If $G$ has $(GRD)_{\beta,p}$ with respect to $\ell$ for some
$p\in(2,\infty)$ and $0<\beta\leq 1$, then
$\log |B_R|=o(R^\beta)$ as $R\to\infty$.
\end{proposition}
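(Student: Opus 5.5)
Test the $(GRD)_{\beta,p}$ inequality on the indicator function of a ball, as in Liao--Yu \cite[Proposition 4.3]{Liao}. Fix $R\ge 0$ and take $f=\chi_{B_R}\in\mathbb C[G]$; this is finitely supported because $\ell$ is proper, and $\operatorname{supp} f\subseteq B_R$. Then $\|f\|_{\ell^p}=|B_R|^{1/p}$. The hypothesis gives
$$\|\lambda_p(f)\|_{B(\ell^p(G))}\le F(R)\,|B_R|^{1/p}.$$

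Next, bound $\|\lambda_p(f)\|$ from below by applying it to $\delta_e$:
$$\lambda_p(f)\delta_e=f*\delta_e=f,\qquad \|f\|_p=|B_R|^{1/p},$$
which only gives $\|\lambda_p(f)\|\ge |B_R|^{1/p}$. That is useless, so a better test vector is needed. Use $\xi=\chi_{B_R}$ itself. We have $(f*\xi)(x)=|B_R\cap xB_R|$, since $y^{-1}x\in B_R$ means $y\in xB_R^{-1}=xB_R$ by symmetry of $\ell$. At $x=e$ this equals $|B_R|$. Hence
$$\|\lambda_p(f)\xi\|_p\ge |B_R| \quad\text{and}\quad \|\xi\|_p=|B_R|^{1/p},$$
so $\|\lambda_p(f)\|\ge |B_R|^{1-1/p}$. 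Combining the two bounds,
$$|B_R|^{1-2/p}\le F(R).$$

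Since $p>2$, the exponent $1-2/p$ is positive. Taking logarithms gives
$$\log|B_R|\le \frac{p}{p-2}\log F(R)=o(R^\beta).$$
This concludes the proof, because the constant $p/(p-2)$ does not affect the little-$o$ relation.

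The only real step is choosing $\xi=\chi_{B_R}$ as the test vector and evaluating the convolution at $e$; it needs $\ell(x^{-1})=\ell(x)$ to identify $xB_R^{-1}$ with $xB_R$. Everything else is routine.
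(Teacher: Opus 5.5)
Your proof is correct and matches the paper's argument: the paper also tests on $f=\chi_{B_R}$ and evaluates $\lambda_p(\chi_{B_R})\xi$ at $e$ to get $\|\lambda_p(\chi_{B_R})\|\ge |B_R|^{1-1/p}$, then combines this with the upper bound to obtain $|B_R|^{1-2/p}\le F(R)$. The only difference is that the paper writes the test vector as $\xi=\chi_{B_R^{-1}}$, which is your $\chi_{B_R}$ by symmetry of $\ell$; and since $e\in B_R$ gives $\log|B_R|\ge 0$, the one-sided bound indeed yields the little-$o$ conclusion.
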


\begin{proof}
Fix $R\geq 0$ and take $f=\chi_{B_R}$. Since
$\operatorname{supp} f\subseteq B_R$, the property $(GRD)_{\beta,p}$ gives
\begin{equation}\label{cheq1}
\|\lambda_p(\chi_{B_R})\|_{\mathcal B(\ell^p(G))}
	\leq F(R)\|\chi_{B_R}\|_{\ell^p(G)}=F(R)|B_R|^{1/p}.
\end{equation}
Now take $\xi=\chi_{B_R^{-1}}$.
Since 
$$(\lambda_p(\chi_{B_R})\xi)(e)=\sum_{y\in G}\chi_{B_R}(y)\xi(y^{-1})
	=\sum_{y\in G}\chi_{B_R}(y)\chi_{B_R^{-1}}(y^{-1})
	=|B_R|,$$
we have
$$\|\lambda_p(\chi_{B_R})\xi\|_{\ell^p(G)}\geq|(\lambda_p(\chi_{B_R})\xi)(e)|=|B_R|.$$
Hence 
\begin{equation}\label{cheneq2}
	\|\lambda_p(\chi_{B_R})\|_{\mathcal B(\ell^p(G))}
	\geq
	\frac{\|\lambda_p(\chi_{B_R})\xi\|_{\ell^p(G)}}{\|\xi\|_{\ell^p(G)}}
	\geq
	\frac{|B_R|}{|B_R|^{1/p}}
	=|B_R|^{1-\frac1p}.
\end{equation}
Combining \eqref{cheq1} and \eqref{cheneq2}, we get
	$$|B_R|^{1-\frac2p}\leq F(R).$$
Since $p>2$, we have $1-\frac2p>0$, and hence
$|B_R|\leq F(R)^{\frac{p}{p-2}}.$
Taking logarithms gives
	$$\log |B_R|\leq\frac{p}{p-2}\log F(R).$$
Since $(GRD)_{\beta,p}$ implies that
$\log F(R)=o(R^\beta)$ as $R\to\infty$, it follows that
$\log |B_R|=o(R^\beta)$ as $R\to\infty$.
\end{proof}

Analogous to the \cite[Corollary 3.18]{Jo1} and \cite[Proposition 2.8]{Sri}, we have the following result for amenable
groups.

\begin{proposition} \label{prop37}
Let $G$ be a countable discrete amenable group equipped with a proper length function $\ell$. If $G$ has $(GRD)_{\beta, p}$ with respect to $\ell$ for some $p\in(1,\infty)$ 
and $0<\beta\leq 1$, then  $\log |B_R|=o(R^\beta)$ as $R\to\infty$. 
\end{proposition}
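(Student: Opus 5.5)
The key fact is the Kesten--Følner type characterization of amenability in the $L^p$ setting: for an amenable group $G$ and any $p\in(1,\infty)$, the operator norm of $\lambda_p(f)$ for a nonnegative function $f\in\mathbb C[G]$ equals its $\ell^1$-norm, that is,
$$\|\lambda_p(f)\|_{\mathcal B(\ell^p(G))}=\|f\|_{\ell^1(G)}\qquad (f\geq 0).$$
I would prove this directly from Følner sets rather than cite it. The upper bound $\le\|f\|_1$ is already recorded in the preliminaries. For the lower bound, fix $f\geq 0$ finitely supported with support $S$, and use amenability to pick a finite Følner set $E$ with $|yE\,\triangle\,E|\le\varepsilon|E|$ for all $y\in S$. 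Take $\xi=\chi_E$, so $\|\xi\|_p=|E|^{1/p}$. Then
$$\langle \lambda_p(f)\chi_E,\chi_E\rangle=\sum_{y}f(y)\,|yE\cap E|\geq (1-\varepsilon)\|f\|_1|E|.$$
By Hölder, this pairing is at most $\|\lambda_p(f)\chi_E\|_p\|\chi_E\|_{p'}=\|\lambda_p(f)\chi_E\|_p\,|E|^{1-1/p}$. Dividing gives
$$\|\lambda_p(f)\|\ge \frac{\|\lambda_p(f)\chi_E\|_p}{|E|^{1/p}}\ge(1-\varepsilon)\|f\|_1.$$
Letting $\varepsilon\to0$ yields the equality. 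This part uses only amenability and works for every $p\in[1,\infty)$.

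Next I apply $(GRD)_{\beta,p}$ to $f=\chi_{B_R}$, which is finitely supported because $\ell$ is proper. This gives
$$|B_R|=\|\chi_{B_R}\|_1=\|\lambda_p(\chi_{B_R})\|\le F(R)\,|B_R|^{1/p}.$$
Hence $|B_R|^{1-1/p}\le F(R)$, and since $p>1$,
$$\log|B_R|\le \frac{p}{p-1}\log F(R)=o(R^\beta).$$
This mirrors the proof of Proposition~\ref{pro-growth-grd}, except that the lower bound $|B_R|^{1-1/p}$ there is replaced by the sharper amenable lower bound $|B_R|$.

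The main step needing care is the lower bound in the Følner argument. One must justify that the pairing $\sum_x (f*\chi_E)(x)\chi_E(x)$ counts exactly $\sum_y f(y)|yE\cap E|$, which follows from $(f*\chi_E)(x)=\sum_y f(y)\chi_{yE}(x)$. One also needs the right-sided versus left-sided Følner convention. Since $\lambda_p$ involves left translates $yE$, I need sets that are almost invariant under left multiplication by elements of $S$. If the available Følner condition is for right translation, I would pass to $E^{-1}$, using that a countable amenable discrete group has Følner sets of both kinds. Everything else is routine.
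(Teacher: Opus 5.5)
Your proposal is correct and follows essentially the same argument as the paper: Følner test vectors $\chi_E$ paired against $\chi_E$ in $\ell^{p'}$ give $\|\lambda_p(\chi_{B_R})\|\ge |B_R|$, and combining this with the $(GRD)_{\beta,p}$ upper bound $F(R)|B_R|^{1/p}$ yields $\log|B_R|\le \frac{p}{p-1}\log F(R)$. The only difference is presentational. You first isolate the general identity $\|\lambda_p(f)\|=\|f\|_1$ for $f\ge 0$ (and are careful about the left/right Følner convention), whereas the paper applies the normalized vectors $\xi_\varepsilon,\eta_\varepsilon$ directly to $f=\chi_{B_R}$.
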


\begin{proof}
Fix $R\geq 0$ and take $f=\chi_{B_R}$. 
The assumption  $G$ has $(GRD)_{\beta, p}$ gives
\begin{equation} \label{cheneq3}
\|\lambda_p(\chi_{B_R})\|_{B(\ell^p(G))} \le F(R)\|\chi_{B_R}\|_{\ell^p(G)} = F(R)|B_R|^{1/p}. 
\end{equation}
Since $G$ is amenable, for the finite subset $B_R \subseteq G$ and every $\varepsilon > 0$, there exists a finite subset $F_\varepsilon \subseteq G$ such that
$$ \frac{|xF_\varepsilon \triangle F_\varepsilon|}{|F_\varepsilon|} \le \varepsilon, $$ for all $ x \in B_R$.
Hence
$ |xF_\varepsilon \cap F_\varepsilon| \ge (1 - \varepsilon)|F_\varepsilon| $
for all $ x \in B_R$. Let $q$ be the conjugate exponent of $p$. Set
$$\xi_\varepsilon=|F_\varepsilon|^{-1/p}\chi_{F_\varepsilon}\in \ell^p(G),
\qquad\eta_\varepsilon=|F_\varepsilon|^{-1/q}\chi_{F_\varepsilon}\in \ell^q(G).$$
It is clear that $\|\xi_\varepsilon\|_{\ell^p(G)} = 1$ and $\|\eta_\varepsilon\|_{\ell^{q}(G)} = 1$. 
Hence, using the duality pairing between $\ell^p(G)$ and $\ell^q(G)$, we obtain
\begin{align*}
\langle \lambda_p(\chi_{B_R})\xi_\varepsilon, \eta_\varepsilon \rangle &= \sum_{x \in B_R} \langle \lambda_p(x)\xi_\varepsilon, \eta_\varepsilon \rangle \\
		&= |F_\varepsilon|^{-\frac{1}{p} - \frac{1}{q}} \sum_{x \in B_R} |xF_\varepsilon \cap F_\varepsilon| \\
		&= |F_\varepsilon|^{-1} \sum_{x \in B_R} |xF_\varepsilon \cap F_\varepsilon| \\
		&\ge |F_\varepsilon|^{-1} \sum_{x \in B_R} (1 - \varepsilon)|F_\varepsilon| \\
		&= (1 - \varepsilon)|B_R|.
\end{align*}
Taking the supremum yields
$\|\lambda_p(f)\|_{B(\ell^p(G))} \ge (1 - \varepsilon)|B_R|$.
Letting $\varepsilon \to 0$, we obtain 
\begin{equation}\label{cheneq4}
	\|\lambda_p(\chi_{B_R})\|_{B(\ell^p(G))} \ge |B_R|.
\end{equation}
Combining inequality \eqref{cheneq3} and \eqref{cheneq4}  yields 
$|B_R|\le F(R)|B_R|^{1/p}$.
Since $1<p<\infty$, the constant coefficient $(1-\frac{1}{p})$ is strictly positive, we get $$ |B_R|^{1-1/p} \le F(R).$$ Taking logarithms, we obtain
$\log |B_R|\leq\frac{p}{p-1}\log F(R)$.
Since $(GRD)_{\beta,p}$ implies that
$\log F(R)=o(R^\beta)$ as $R\to\infty$, it follows that $\log |B_R|=o(R^\beta)$ as  $R\to\infty$.
\end{proof}

The following proposition is analogous to \cite[Theorem 4.4]{Liao}.
\begin{proposition}\label{pr1}
Let $G$ be a countable discrete group equipped with a proper length function
$\ell$. If $G$ has $(GRD)_{\beta,p}$ with respect to $\ell$ for some
$p\in(1,\infty)$ and $0<\beta\leq 1$, then it has $(GRD)_{\beta,r}$
with respect to $\ell$ for every $r\in(1,p)$.
\end{proposition}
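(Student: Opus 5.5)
The plan is to interpolate between the case $p$ and the trivial case $r=1$, following the strategy of \cite[Theorem 4.4]{Liao}. For $r=1$ and $\operatorname{supp} f\subseteq B_R$ we have $\|\lambda_1(f)\|_{B(\ell^1(G))}=\|f\|_{\ell^1(G)}$, so on $\ell^1$ the operator norm is controlled by $\|f\|_1$ with constant $1$. Given $r\in(1,p)$, choose $\theta\in(0,1)$ with
$$\frac1r=\frac{1-\theta}{1}+\frac{\theta}{p}.$$
The goal is the estimate
$$\|\lambda_r(f)\|_{B(\ell^r)}\le F(R)^{\theta}\,\|f\|_{\ell^r}$$
for $f$ supported in $B_R$. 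Since $\log F(R)^\theta=\theta\log F(R)=o(R^\beta)$, this would show that $F^\theta$ is a control function for $(GRD)_{\beta,r}$.

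The key step is to interpolate with a \emph{fixed} support. Fix the finite set $B_R$ and consider the finite-dimensional space $\mathbb C^{B_R}$ of functions supported there. The map $T:f\mapsto\lambda(f)$ is linear from $\mathbb C^{B_R}$ into operators, and the difficulty is that the target spaces $B(\ell^1)$, $B(\ell^p)$, $B(\ell^r)$ do not interpolate cleanly. To avoid this, I will use a bilinear (Stein-type) complex interpolation on the convolution map $(f,\xi)\mapsto f*\xi$, with $f\in\ell^{s}(B_R)$, $\xi\in\ell^{s}(G)$ and values in $\ell^{s}(G)$. At $s=1$ it has norm at most $1$, and at $s=p$ it has norm at most $F(R)$. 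Concretely, for simple $f$ and $\xi$ with $\|f\|_r=\|\xi\|_r=1$ and a test function $\eta$ with $\|\eta\|_{r'}=1$, I will define
$$f_z=|f|^{r\alpha(z)}\operatorname{sgn}f,\qquad \xi_z=|\xi|^{r\alpha(z)}\operatorname{sgn}\xi,\qquad \eta_z=|\eta|^{r'(1-\alpha(z))/(1-1/p)\cdots},$$
with exponents chosen in the standard way, where $\alpha(z)=(1-z)+z/p$. Then
$$\Phi(z)=\langle f_z*\xi_z,\eta_z\rangle$$
is a bounded entire function on the strip. On $\operatorname{Re}z=0$ it is bounded by $1$ using the $\ell^1$ bound, and on $\operatorname{Re}z=1$ by $F(R)$ using the $(GRD)_{\beta,p}$ bound. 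The three lines lemma then gives $|\Phi(\theta)|\le F(R)^\theta$. Because $f_z$ keeps support in $B_R$ for every $z$, the hypothesis applies on the boundary line.

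The main obstacle is the $\eta$ exponent at $\operatorname{Re}z=0$. The dual exponent there is $\infty$, so $\eta_z$ has to be taken as $\operatorname{sgn}\eta\cdot|\eta|^{0}$ times a modulus factor, and one has to check that $|\langle f_z*\xi_z,\eta_z\rangle|\le\|f_z\|_1\|\xi_z\|_1\|\eta_z\|_\infty=1$. I expect this to work exactly as in the Riesz--Thorin proof of Young's inequality.

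If the bilinear route proves messy, the fallback is the multilinear Riesz--Thorin theorem applied directly. The convolution is bounded as a map $\ell^1(B_R)\times\ell^1\to\ell^1$ with norm $1$, and as a map $\ell^p(B_R)\times\ell^p\to\ell^p$ with norm $F(R)$. Multilinear complex interpolation at parameter $\theta$ gives the bound $\ell^r(B_R)\times\ell^r\to\ell^r$ with norm $F(R)^\theta$, which is exactly the required inequality. Density of finitely supported $\xi$ in $\ell^r(G)$ then finishes the argument.
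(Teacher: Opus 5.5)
Your proposal is correct and is essentially the paper's proof: the paper likewise interpolates the bilinear convolution map $\ell^s(B_R)\times\ell^s(G)\to\ell^s(G)$ between $s=1$ (norm $\le 1$ by Young's inequality) and $s=p$ (norm $\le F(R)$ by $(GRD)_{\beta,p}$) via bilinear Riesz--Thorin, and then takes $F^\theta$ as the control function. Your three-lines sketch simply unpacks that theorem, with one slip: the dual test function should be $\eta_z=|\eta|^{r'(1-\alpha(z))}\operatorname{sgn}\eta$ (no extra factor $1/(1-1/p)$), which has modulus $1$ on $\operatorname{supp}\eta$ when $\operatorname{Re}z=0$ and unit $\ell^{p'}$-norm when $\operatorname{Re}z=1$.
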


\begin{proof}
Let $1<r<p$. By assumption, for every $f \in \mathbb{C}[G]$ with $\mathrm{supp}(f) \subseteq B_R$, there exists a  function $F:\mathbb R_{\geq 0}\to[1,\infty)$ such that
$$\|\lambda_p(f)\|_{\mathcal B(\ell^p(G))}
	\le F(R)\|f\|_{\ell^p(G)}$$
and $\log F(R)=o(R^\beta)$ as $R\to\infty$.
Fix $R\ge 0$. Since $\ell$ is proper, $B_R$ is finite. Define 
$$\mathbb C^{B_R}=\{f\in\mathbb \mathbb C[G]:\operatorname{supp}(f)\subseteq B_R\}.$$
Consider the bilinear map
$T_R(f,\xi)=f*\xi$.	First, by Young's inequality, for $f\in\mathbb C^{B_R}$ and $\xi\in \mathbb C[G]$,
$$\|T_R(f,\xi)\|_{\ell^1(G)}
=\|f*\xi\|_{\ell^1(G)}\le\|f\|_{\ell^1(G)}\|\xi\|_{\ell^1(G)}.$$
Hence
$T_R:\ell^1(B_R)\times\ell^1(G)\to\ell^1(G)$
has bilinear norm at most $1$. Second, by the $(GRD)_{\beta,p}$ assumption,
for every $f\in\mathbb C^{B_R}$ and  $\xi\in \mathbb C[G]$,
\begin{align*}
\|T_R(f,\xi)\|_{\ell^p(G)}
&=\|f*\xi\|_{\ell^p(G)}\\
&=\|\lambda_p(f)\xi\|_{\ell^p(G)}\\
&\le\|\lambda_p(f)\|_{\mathcal B(\ell^p(G))}\|\xi\|_{\ell^p(G)}\\
&\le F(R)\|f\|_{\ell^p(G)}\|\xi\|_{\ell^p(G)}.
\end{align*}
By density of $\mathbb C[G]$ in $\ell^p(G)$, this estimate extends to all
$\xi\in\ell^p(G)$. Therefore $T_R:\ell^p(S)\times\ell^p(G)\to\ell^p(G)$
has bilinear norm at most $F(R)$.
Choose $\theta\in(0,1)$ such that
$$\frac1r=\frac{1-\theta}{1}+\frac{\theta}{p}.$$
Equivalently, $\theta=\frac{1-\frac1r}{1-\frac1p}$.
Since $1<r<p$, indeed $0<\theta<1$.
By the bilinear Riesz-Thorin interpolation theorem \cite{Calde},
$T_R:\ell^r(S)\times\ell^r(G)\to\ell^r(G)$
has bilinear norm at most
$$1^{1-\theta}F(R)^\theta=F(R)^\theta.$$
Therefore, for every $f\in\mathbb C^{B_R}$ and every
$\xi\in\ell^r(G)$,
$$\|f*\xi\|_{\ell^r(G)}\le F(R)^\theta \|f\|_{\ell^r(G)}\|\xi\|_{\ell^r(G)}.$$
Now let $f\in\mathbb \mathbb C[G]$ satisfy
$\operatorname{supp}(f)\subseteq B_R$.
Then $f\in\mathbb C^{B_R}$, and hence
$$\|f*\xi\|_{\ell^r(G)}\le	F(R)^\theta	\|f\|_{\ell^r(G)}	\|\xi\|_{\ell^r(G)}	$$
for every $\xi\in\ell^r(G)$. Therefore
$$\|\lambda_r(f)\|_{\mathcal B(\ell^r(G))}\le F(R)^\theta\|f\|_{\ell^r(G)}.$$
Set $F_r(R)=F(R)^\theta$.
Since $F(R)\geq 1$, we have $F_r(R)\geq 1$. Moreover,
$$\log F_r(R)=\theta\log F(R)=o(R^\beta)\qquad\text{as} \ R\to\infty.$$
Thus $G$ has $(GRD)_{\beta,r}$ with respect to $\ell$.
\end{proof}

The following result shows $(GRD)_{\beta,p}$ is  stable under direct products.
\begin{proposition}\label{stable}
Let $G$ and $H$ be countable discrete groups equipped with proper length functions
$\ell_G$ and $\ell_H$, respectively. Let $p\in[1,\infty)$ and	$0<\beta\leq 1$. If $G$ and $H$ have $(GRD)_{\beta,p}$ with respect to
$\ell_G$ and $\ell_H$, respectively, with control functions $F_G$ and $F_H$,
then $G\times H$, endowed with the length function
	$$\ell_{G\times H}(g,h)=\ell_G(g)+\ell_H(h),$$
has $(GRD)_{\beta,p}$ with control function $F_{G\times H}(R)=F_G(R)F_H(R)$.
\end{proposition}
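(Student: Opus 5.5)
The plan is to write a function supported in the product ball as a finite sum of elementary tensors, bound each slice using the one-factor $(GRD)_{\beta,p}$ estimates, and then use the tensor-product structure of $\ell^p(G\times H)$.

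\emph{Setup.} Fix $R\ge 0$ and $f\in\mathbb C[G\times H]$ with $\operatorname{supp} f\subseteq B_R^{G\times H}$. Since lengths are nonnegative, $\ell_G(g)+\ell_H(h)\le R$ forces $\ell_G(g)\le R$ and $\ell_H(h)\le R$. Hence $\operatorname{supp} f\subseteq B_R^G\times B_R^H$. Identify $\ell^p(G\times H)$ with $\ell^p(G;\ell^p(H))$ via $\xi\mapsto(g\mapsto\xi(g,\cdot))$. For $g\in G$ put $f_g=f(g,\cdot)\in\mathbb C[H]$, which is supported in $B_R^H$. Then
\[
(f*\xi)(x,\cdot)=\sum_{g\in G} f_g*\xi(g^{-1}x,\cdot),
\]
where the inner convolution is on $H$.

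\emph{Step 1: the $H$-factor.} By $(GRD)_{\beta,p}$ for $H$, each slice satisfies
\[
\|f_g*\xi(g^{-1}x,\cdot)\|_{\ell^p(H)}\le F_H(R)\,\|f_g\|_{\ell^p(H)}\,\|\xi(g^{-1}x,\cdot)\|_{\ell^p(H)}.
\]
Define $a(g)=\|f_g\|_{\ell^p(H)}$, which is finitely supported in $B_R^G$ with $\|a\|_{\ell^p(G)}=\|f\|_{\ell^p(G\times H)}$. Define $b(y)=\|\xi(y,\cdot)\|_{\ell^p(H)}$, so $\|b\|_{\ell^p(G)}=\|\xi\|_p$.

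\emph{Step 2: the $G$-factor.} The triangle inequality gives the pointwise bound
\[
\|(f*\xi)(x,\cdot)\|_{\ell^p(H)}\le F_H(R)\,(a*b)(x).
\]
Taking $\ell^p(G)$-norms and applying $(GRD)_{\beta,p}$ for $G$ to $a$, supported in $B_R^G$, we get
\[
\|f*\xi\|_p\le F_H(R)\,F_G(R)\,\|a\|_p\,\|b\|_p=F_G(R)F_H(R)\,\|f\|_p\,\|\xi\|_p.
\]
Applying the $G$-estimate to the nonnegative function $b$ is legitimate because the $(GRD)$ bound is an operator norm bound valid for all of $\ell^p(G)$. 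The obstacle is that $(GRD)$ is stated only for finitely supported $f$, while $b$ may not be finitely supported. This is harmless: we only need $a\in\mathbb C[G]$, and $\lambda_p(a)$ is bounded on all of $\ell^p(G)$. This slice-wise domination by a convolution of norm functions is the key step; everything else is bookkeeping.

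\emph{Step 3: growth of the control function.} The function $F_GF_H$ takes values in $[1,\infty)$, and
\[
\log(F_GF_H)(R)=\log F_G(R)+\log F_H(R)=o(R^\beta),
\]
so $F_GF_H$ is a valid control function for $(GRD)_{\beta,p}$ on $G\times H$.
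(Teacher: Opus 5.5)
Your argument is correct and is essentially the paper's proof: slice $f$ along $G$, bound each slice with the $H$-estimate, dominate $\|(f*\xi)(x,\cdot)\|_{\ell^p(H)}$ by a convolution on $G$ of norm functions, and apply the $G$-estimate to the finitely supported factor. The only difference is cosmetic. The paper uses $A(g)=\|\lambda_p^H(f_g)\|_{\mathcal B(\ell^p(H))}$ and absorbs $F_H(R)$ via $\|A\|_{\ell^p(G)}\le F_H(R)\|f\|_{\ell^p(G\times H)}$, whereas you use $a(g)=\|f_g\|_{\ell^p(H)}$ and pull $F_H(R)$ out front. Your opening sentence about elementary tensors and the tensor-product structure does not describe what you actually do, and can be dropped.
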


\begin{proof}
For any $R>0$, we denote the closed balls in $G$ and $H$ by
$$B_R^G=\{g \in G:\ell_G(g)\le R\}, \qquad B_R^H = \{h \in H:\ell_H(h)\le R\}.$$
Let $\phi\in\mathbb{C}[G\times H]$ with $\mathrm{supp}(\phi)\subseteq B_R^{G\times H}$, where
$$B_R^{G\times H}=\{(g, h)\in G\times H : \ell_G(g)+\ell_H(h)\le R\}.$$ 
For each $g\in G$, we define $\phi_g(h)=\phi(g, h)$. If $\phi_g\neq 0$, then 
$\ell_G(g)\le R$ and $\mathrm{supp}(\phi_g)\subseteq B_{R-\ell_G(g)}^H\subseteq B_R^H$.
For each $g \in G$, define
	$$A(g)=\|\lambda_p^H(\phi_g)\|_{B(\ell^p(H))}.$$
Since $H$ has  $(GRD)_{\beta, p}$, we have
$ A(g) \le F_H(R)\|\phi_g\|_{\ell^p(H)}$.
Since $\phi$ has finite support, $A$ is finitely supported and hence
may be regarded as an element of $\mathbb C[G]$. Moreover, it is clear that $\mathrm{supp}(A) \subseteq B_R^G$, we obtain
$$\|A\|_{\ell^p(G)}\le F_H(R)\left(\sum_{g\in G}\|\phi_g\|_{\ell^p(H)}^p\right)^{1/p}.$$
By Fubini's theorem for discrete sums, the sum on the right-hand side is exactly $\|\phi\|_{\ell^p(G \times H)}$. Hence,
\begin{equation}\label{cheneq7}
\|A\|_{\ell^p(G)} \le F_H(R)\|\phi\|_{\ell^p(G \times H)}. 
\end{equation}
Let $\xi\in\ell^p(G\times H)$. For each $x\in G$, define $\xi_x(k)=\xi(x, k)$ for $k\in H$, and set $\eta(x)=\|\xi_x\|_{\ell^p(H)}$. 
Then $\|\eta\|_{\ell^p(G)}=\|\xi\|_{\ell^p(G\times H)}$. 
Now, for fixed $x\in G$ and $k\in H$, 
\begin{align*}
(\lambda_p^{G \times H}(\phi )\xi)(x, k) 
&=\sum_{g\in G}\sum_{h\in H} \phi(g, h)\xi(g^{-1}x, h^{-1}k) \\
&=\sum_{g\in G}(\lambda_p^H(\phi_g)\xi_{g^{-1}x})(k).
\end{align*} 
Taking the $\ell^p(H)$-norm with respect to the variable $k$, we get
\begin{align*}
\|(\lambda_p^{G\times H}(\phi)\xi)_x\|_{\ell^p(H)} 
&\le\sum_{g\in G}\|\lambda_p^H(\phi_g)\xi_{g^{-1}x}\|_{\ell^p(H)} \\
&\le\sum_{g\in G} A(g)\|\xi_{g^{-1}x}\|_{\ell^p(H)} \\
&=\sum_{g\in G} A(g)\eta(g^{-1}x)=(A * \eta)(x).
\end{align*} 
Taking the $\ell^p(G)$-norm with respect to $x$ yields
\begin{equation}\label{cheneq8}
\|\lambda_p^{G \times H}(\phi)\xi\|_{\ell^p(G \times H)} \le \|A * \eta\|_{\ell^p(G)}. 
\end{equation}
Since $G$ has $(GRD)_{\beta, p}$ and $\mathrm{supp}(A) \subseteq B_R^G$, we have
\begin{equation} \label{cheneq9}
\|A * \eta\|_{\ell^p(G)} \le F_G(R)\|A\|_{\ell^p(G)}\|\eta\|_{\ell^p(G)}. 
\end{equation}
By \eqref{cheneq7}, \eqref{cheneq8} and \eqref{cheneq9}, we obtain
$$\|\lambda_p^{G\times H}(\phi)\xi\|_{\ell^p(G \times H)}\le F_G(R) \Big( F_H(R)\|\phi\|_{\ell^p(G\times H)} \Big)\|\xi\|_{\ell^p(G\times H)}. $$
Taking the supremum over all nonzero $\xi \in \ell^p(G\times H)$ gives
$$\|\lambda_p^{G\times H}(\phi)\|_{B(\ell^p(G\times H))} \le F_G(R)F_H(R)\|\phi\|_{\ell^p(G \times H)}. $$
Since $$\log\left(F_G(R)F_H(R)\right)=\log\left(F_G(R)\right)+\log\left(F_H(R)\right)=o(R^\beta)$$so $G\times H$ satisfies $(GRD)_{\beta, p}$ with the control function 
$F_{G \times H}(R)=F_G(R)F_H(R)$.
\end{proof}

Proposition \ref{stable} immediately yields the following corollaries.
\begin{corollary}
Let $G$ and $H$ be countable discrete groups equipped with proper length
functions $\ell_G$ and $\ell_H$, respectively. Let $p\in[1,\infty)$.
Suppose that $G$ has $(GRD)_{\beta_G,p}$ with respect to $\ell_G$ and
$H$ has $(GRD)_{\beta_H,p}$ with respect to $\ell_H$, where
$0<\beta_G,\beta_H\leq 1$. Then $G\times H$, endowed with the length function
$$\ell_{G\times H}(g,h)=\ell_G(g)+\ell_H(h),$$
has $(GRD)_{\max\{\beta_G,\beta_H\},p}$.
\end{corollary}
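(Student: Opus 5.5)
The plan is to reduce the corollary to Proposition~\ref{stable} by first bringing both factors to the common exponent $\beta_0=\max\{\beta_G,\beta_H\}$, and then invoking the product estimate.

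\textbf{Step 1: common exponent.} Since $\beta_G\le\beta_0\le 1$, Lemma~\ref{monotone} shows that $G$ has $(GRD)_{\beta_0,p}$ with respect to $\ell_G$. Indeed, the same control function $F_G$ works, because $\log F_G(R)=o(R^{\beta_G})$ implies $\log F_G(R)=o(R^{\beta_0})$. The same argument applies to $H$ with $F_H$ and $\beta_H\le\beta_0$.

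\textbf{Step 2: product estimate.} Apply Proposition~\ref{stable} with the common exponent $\beta_0$ to $G$ and $H$, using the control functions $F_G$ and $F_H$. This gives, for every $\phi\in\mathbb C[G\times H]$ supported in the $\ell_{G\times H}$-ball of radius $R$,
$$\|\lambda_p^{G\times H}(\phi)\|\le F_G(R)F_H(R)\|\phi\|_{\ell^p(G\times H)}.$$
It also gives $\log(F_GF_H)(R)=o(R^{\beta_0})$, as a sum of two $o(R^{\beta_0})$ terms. One also checks that $F_GF_H\ge 1$ and that $\ell_{G\times H}$ is a proper length function.

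The product-length condition is proper because the ball of radius $R$ in $G\times H$ is contained in $B_R^G\times B_R^H$, which is finite. I do not expect any step to be an obstacle: the only point requiring care is that Proposition~\ref{stable} needs a single exponent, and Lemma~\ref{monotone} supplies exactly that.
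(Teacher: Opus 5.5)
Your proposal is correct and matches the paper's proof: Lemma~\ref{monotone} lifts both factors to the common exponent $\max\{\beta_G,\beta_H\}$, and Proposition~\ref{stable} then gives $(GRD)_{\max\{\beta_G,\beta_H\},p}$ for $G\times H$. You also check that $\ell_{G\times H}$ is a proper length function and that $F_GF_H\ge 1$; the paper leaves both points implicit.
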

\begin{proof}
Set $\beta=\max\{\beta_G,\beta_H\}$. Since $\beta_G\leq \beta$ and
$\beta_H\leq \beta$, by Lemma \ref{monotone}, $G$ and $H$ both have
$(GRD)_{\beta,p}$ with respect to $\ell_G$ and $\ell_H$, respectively.
By Proposition~\ref{stable}, the direct product $G\times H$, endowed with
	$$\ell_{G\times H}(g,h)=\ell_G(g)+\ell_H(h),$$
has $(GRD)_{\beta,p}$. That is, $G\times H$ has $(GRD)_{\max\{\beta_G,\beta_H\},p}$.
\end{proof}

\begin{corollary}
Let $G$ and $H$ be countable discrete groups equipped with proper length functions
$\ell_G$ and $\ell_H$, respectively.    
For $p, p^\prime\in[1,\infty)$ and $0<\beta\leq 1$. If $G$ have $(GRD)_{\beta, p}$ with respect to $\ell_G$ and $H$ have 
$(GRD)_{\beta, p^\prime}$ with respect to
$\ell_H$, then the direct product $G\times H$, endowed with the length
$$\ell_{G\times H}(g,h)=\ell_G(g)+\ell_H(h),$$
has $(GRD)_{\beta, r}$, where $r=\min{(p,p^\prime)}$.
\end{corollary}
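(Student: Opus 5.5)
The plan is to reduce to Proposition~\ref{stable}, which requires both factors to satisfy $(GRD)_{\beta,\cdot}$ for the \emph{same} exponent. So I will first show that $G$ and $H$ both have $(GRD)_{\beta,r}$, with $r=\min(p,p')$, and then apply Proposition~\ref{stable} with that common exponent. By symmetry I may assume $p\le p'$, so that $r=p$ and $G$ already has $(GRD)_{\beta,r}$. It remains to lower the exponent for $H$ from $p'$ to $p$.

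\emph{Case $r=1$.} Here I do not need Proposition~\ref{pr1}, which requires $r\in(1,p')$, at all. Every countable discrete group has $(GRD)_{\beta,1}$ with respect to any proper length function, using the constant control function $F\equiv 1$. Indeed, for $f\in\mathbb C[G]$ the estimate
\[
\|\lambda_1(f)\|_{\mathcal B(\ell^1(G))}\le \|f\|_{\ell^1(G)}
\]
follows from $\|f*\xi\|_1\le\|f\|_1\|\xi\|_1$, and $\log 1=0=o(R^\beta)$ trivially. This could be applied to $H$, or even directly to $G\times H$ with the sum length, so the conclusion holds at once.

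\emph{Case $1<r=p\le p'$.} If $p=p'$, there is nothing to do. If $p<p'$, then $H$ has $(GRD)_{\beta,p'}$ with $p'\in(1,\infty)$ and $p\in(1,p')$. Proposition~\ref{pr1} then gives $(GRD)_{\beta,p}$ for $H$, with control function $F_H^{\theta}$ for a suitable $\theta\in(0,1)$. Now both $G$ and $H$ have $(GRD)_{\beta,r}$, and Proposition~\ref{stable} shows that $G\times H$, with the length $\ell_G(g)+\ell_H(h)$, has $(GRD)_{\beta,r}$ with control function $F_G\cdot F_H^{\theta}$.

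The only delicate point is the boundary exponent $r=1$. Proposition~\ref{pr1} is stated only for $r\in(1,p)$, and the interpolation parameter $\theta$ degenerates to $0$ at $r=1$. I would handle this by the trivial $\ell^1$ bound above rather than by trying to extend the interpolation argument.
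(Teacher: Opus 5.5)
Your proposal is correct and follows the paper's proof. The case $r=1$ is handled by the trivial $\ell^1$ (Young) bound with $F\equiv 1$, applied directly to $G\times H$, and for $r>1$ you lower the exponent using Proposition~\ref{pr1} and then apply Proposition~\ref{stable}. Your separate treatment of the equality case (where Proposition~\ref{pr1}, stated only for the open interval, does not literally apply) is slightly more careful than the paper, which simply writes $r\le p$.
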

\begin{proof}
Set $r=\min\{p,p^\prime\}$. If $r=1$, then the conclusion follows from Young's inequality. Indeed,
for every finitely supported function $\phi$ on $G\times H$ and every
$\xi\in\ell^1(G\times H)$, we have
$$\|\phi*\xi\|_{\ell^1(G\times H)}
	\le
\|\phi\|_{\ell^1(G\times H)}\|\xi\|_{\ell^1(G\times H)}.$$
Hence
$$\|\lambda_1^{G\times H}(\phi)\|_{\mathcal B(\ell^1(G\times H))}
	\le\|\phi\|_{\ell^1(G\times H)}.$$
Thus $G\times H$ has $(GRD)_{\beta,1}$ with control function
$F(R)=1$. Now assume that $r>1$. Since $r\le p$ and $G$ has $(GRD)_{\beta,p}$, Proposition \ref{pr1} implies that $G$ has $(GRD)_{\beta,r}$. Similarly, since $r\le p^\prime$ and $H$ has $(GRD)_{\beta,p^\prime}$, the group
$H$ has $(GRD)_{\beta,r}$.
By  Proposition \ref{stable}, the group
$G\times H$, equipped with
$\ell_{G\times H}(g,h)=\ell_G(g)+\ell_H(h)$, 
has $(GRD)_{\beta,r}$.
\end{proof}

The next Proposition  generalizes  Proposition 2.1.1 and 2.1.5 of \cite{Jo1}.
\begin{proposition}\label{subgroup-stable}
Let $G$ be a countable discrete group equipped with a proper length function
$\ell_G$, and let $p\in[1,\infty)$. Let $H\leq G$ be a subgroup, equipped
with the restricted length function $\ell_H=\ell_G|_H$. Then the following
hold.
	\begin{enumerate}
		\item[(i)] If $G$ has $(GRD)_{\beta,p}$ with respect to $\ell_G$ for some
		$0<\beta\leq 1$, then $H$ has $(GRD)_{\beta,p}$ with respect to $\ell_H$.
		
		\item[(ii)] If $0<\beta\leq 1$ and $[G:H]<\infty$, then $G$ has
		$(GRD)_{\beta,p}$ with respect to $\ell_G$ if and only if $H$ has
		$(GRD)_{\beta,p}$ with respect to $\ell_H$.
	\end{enumerate}
\end{proposition}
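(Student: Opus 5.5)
Part (i) reduces to the restriction of convolution operators from $G$ to $H$. Take $f\in\mathbb C[H]$ with $\operatorname{supp}(f)\subseteq B_R^H=B_R^G\cap H$, and regard it as an element of $\mathbb C[G]$ supported in $B_R^G$. Decompose $G$ into right cosets $G=\bigsqcup_{i} Hs_i$, which gives $\ell^p(G)=\bigoplus_i^{\ell^p}\ell^p(Hs_i)$. For $y\in H$ the coset $Hs_i$ is stable under $x\mapsto y^{-1}x$, so $\lambda_p^G(f)$ leaves each summand $\ell^p(Hs_i)$ invariant. On each summand, right translation by $s_i$ gives an isometry $\ell^p(H)\cong\ell^p(Hs_i)$ that intertwines $\lambda^G_p(f)$ with $\lambda^H_p(f)$. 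Hence $\lambda_p^G(f)$ is an $\ell^p$-direct sum of copies of $\lambda_p^H(f)$, and
$$\|\lambda_p^H(f)\|_{\mathcal B(\ell^p(H))}=\|\lambda_p^G(f)\|_{\mathcal B(\ell^p(G))}\le F_G(R)\|f\|_{\ell^p(H)}.$$
We may therefore keep the same control function $F_G$. (The easier inequality $\le$ suffices, obtained by testing against $\xi$ supported in $H$.)

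For part (ii), the direction $G\Rightarrow H$ is part (i). For the converse, assume $H$ has $(GRD)_{\beta,p}$ with control function $F_H$, and fix a finite set of left coset representatives $\{s_1,\dots,s_n\}$ with $G=\bigsqcup_j s_jH$. Set $c=\max_j\ell_G(s_j)$. Let $f\in\mathbb C[G]$ with $\operatorname{supp}(f)\subseteq B_R^G$, and write
$$f=\sum_{j=1}^n\delta_{s_j}*f_j,\qquad f_j(h)=f(s_jh),\ h\in H.$$
Then $\operatorname{supp}(f_j)\subseteq H\cap B^G_{R+c}$, because $\ell(h)\le\ell(s_j)+\ell(s_jh)\le R+c$. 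Since $\lambda_p(\delta_{s_j})$ is an isometry, we have $\|\lambda^G_p(f)\|\le\sum_j\|\lambda_p^G(f_j)\|$. By the coset-decomposition identity from part (i), $\|\lambda_p^G(f_j)\|=\|\lambda_p^H(f_j)\|\le F_H(R+c)\|f_j\|_{\ell^p(H)}$. Finally, $\sum_j\|f_j\|_p\le n^{1-1/p}\big(\sum_j\|f_j\|_p^p\big)^{1/p}=n^{1-1/p}\|f\|_p$. This yields the control function
$$F_G(R)=n^{1-1/p}F_H(R+c),$$
which satisfies $\log F_G(R)=\log n^{1-1/p}+o((R+c)^\beta)=o(R^\beta)$, and $F_G(R)\ge1$ as required.

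The step that needs the most care is the equality $\|\lambda_p^G(f)\|=\|\lambda_p^H(f)\|$ for $f$ supported in $H$, in particular the choice of side for the cosets. Convolution acts by left multiplication, so the invariant pieces must be the right cosets $Hs$. The intertwining map is $\xi\mapsto(hs\mapsto\xi(h))$, and one must check that it commutes with $\xi\mapsto f*\xi$. The rest is bookkeeping with the triangle and Hölder inequalities, together with the observation that shifting $R$ by the constant $c$ does not affect the $o(R^\beta)$ condition.
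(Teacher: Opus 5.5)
Your proposal is correct and follows the paper's proof essentially step for step. In (i) you use the same right-coset $\ell^p$-direct-sum decomposition to get $\|\lambda_p^G(f)\|=\|\lambda_p^H(f)\|$, and in (ii) the same argument: split $f$ over the finitely many cosets, remove the translation as an isometry, use the shifted control function $F_H(R+c)$, and apply H\"older to get the factor $n^{1-1/p}$. The only difference is cosmetic: you write $f=\sum_j\delta_{s_j}*f_j$ over left cosets $s_jH$ with $c=\max_j\ell_G(s_j)$, while the paper writes $\phi_i=\widetilde\phi_i*\delta_{s_i}$ over right cosets $Hs_i$ with $M=\max_i\ell_G(s_i^{-1})$, and both work equally well.
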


\begin{proof}
(i) Since $\ell_H=\ell_G|_H$, we have $B_R^H=B_R^G\cap H$.
Let $f\in\mathbb C[H]$ satisfy
$\operatorname{supp}(f)\subseteq B_R^H$. Viewing $f$ as an element of
$\mathbb C[G]$, we have $\operatorname{supp}(f)\subseteq B_R^G$.
We claim that
\begin{equation}\label{eq312}
	\|\lambda_p^G(f)\|_{\mathcal B(\ell^p(G))}=\|\lambda_p^H(f)\|_{\mathcal B(\ell^p(H))}.
\end{equation}
Indeed, decompose $G$ into cosets of the form $Hx$, that is 
	$G=\bigsqcup_{x\in H\backslash G}Hx$.
Then $$\ell^p(G)\cong \bigoplus_{x\in H\backslash G}^{p}\ell^p(Hx).$$
Since $f$ is supported in $H$, the operator $\lambda_p^G(f)$ preserves each
subspace $\ell^p(Hx)$. Under the isometric identification
$\ell^p(Hx)\cong \ell^p(H)$, its restriction to $\ell^p(Hx)$ is precisely
$\lambda_p^H(f)$. Hence $\lambda_p^G(f)$ is the $\ell^p$-direct sum of copies
of $\lambda_p^H(f)$, and the claimed equality follows.
If $F_G$ is a control function for $(GRD)_{\beta,p}$ on $G$, then
	$$\|\lambda_p^H(f)\|_{\mathcal B(\ell^p(H))}
	=\|\lambda_p^G(f)\|_{\mathcal B(\ell^p(G))}\leq
	F_G(R)\|f\|_{\ell^p(G)}=F_G(R)\|f\|_{\ell^p(H)}.$$
Therefore $H$ has $(GRD)_{\beta,p}$ with respect to $\ell_H$.
	
(ii) The implication from $G$ to $H$ follows from (i). Conversely, assume that
$[G:H]=k<\infty$ and that $H$ has $(GRD)_{\beta,p}$ with respect to
$\ell_H$. Choose representatives $s_1,\dots,s_k$ such that
	$$G=\bigsqcup_{i=1}^k Hs_i.$$
Set $M=\max\limits_{1\leq i\leq k}\ell_G(s_i^{-1})$.
Let $\phi\in\mathbb C[G]$ satisfy $\operatorname{supp}(\phi)\subseteq B_R^G$.
Write $\phi=\sum_{i=1}^k \phi_i$, where $\phi_i$ is supported in $Hs_i$. For each $i$, define
$\widetilde\phi_i\in\mathbb C[H]$ by
$\widetilde\phi_i(h)=\phi_i(hs_i)$.
Then $\phi_i=\widetilde\phi_i*\delta_{s_i}$.
If $h\in\operatorname{supp}(\widetilde\phi_i)$, then
$hs_i\in\operatorname{supp}(\phi)\subseteq B_R^G$.
Therefore
	$$\ell_H(h)=\ell_G(h)\leq\ell_G(hs_i)+\ell_G(s_i^{-1})\leq R+M.$$
Hence
$\operatorname{supp}(\widetilde\phi_i)\subseteq B_{R+M}^H$.
Since $\lambda_p^G(\delta_{s_i})$ is an isometry on $\ell^p(G)$, we obtain
\begin{align*}
\|\lambda_p^G(\phi)\|_{\mathcal B(\ell^p(G))}
&\leq\sum_{i=1}^k\|\lambda_p^G(\widetilde\phi_i*\delta_{s_i})\|_{\mathcal B(\ell^p(G))} \\
&\leq\sum_{i=1}^k\|\lambda_p^G(\widetilde\phi_i)\|_{\mathcal B(\ell^p(G))}.
\end{align*}
By the Eq. \eqref{eq312},  we have
$\|\lambda_p^G(\widetilde\phi_i)\|_{\mathcal B(\ell^p(G))}
	=\|\lambda_p^H(\widetilde\phi_i)\|_{\mathcal B(\ell^p(H))}$.
Thus, if $F_H$ is a control function for $H$, then
$$\|\lambda_p^G(\phi)\|_{\mathcal B(\ell^p(G))}
	\leq F_H(R+M)\sum_{i=1}^k\|\widetilde\phi_i\|_{\ell^p(H)}.$$
By Hölder's inequality,
$$\sum_{i=1}^k\|\widetilde\phi_i\|_{\ell^p(H)}
	\leq k^{1-\frac1p}\left(\sum_{i=1}^k\|\widetilde\phi_i\|_{\ell^p(H)}^p\right)^{1/p}.$$
Since the supports of the $\phi_i$ are disjoint and
$\|\widetilde\phi_i\|_{\ell^p(H)}=\|\phi_i\|_{\ell^p(G)}$,
we have
$\left(\sum_{i=1}^k\|\widetilde\phi_i\|_{\ell^p(H)}^p\right)^{1/p}=\|\phi\|_{\ell^p(G)}$.
Therefore
$$\|\lambda_p^G(\phi)\|_{\mathcal B(\ell^p(G))}
	\leq
k^{1-\frac1p}F_H(R+M)\|\phi\|_{\ell^p(G)}.$$
Define $F_G(R)=k^{1-\frac1p}F_H(R+M)$.
Since $k$ and $M$ are fixed and $\log F_H(R)=o(R^\beta)$,
we get
$$\log F_G(R)=\left(1-\frac1p\right)\log k+\log F_H(R+M)=o(R^\beta).$$
Hence $G$ has $(GRD)_{\beta,p}$ with respect to $\ell_G$.
\end{proof}

\section{ $\beta$-Gevrey regular $L^p$-spectral triple}

In this section, we introduce the notions of an $L^p$-spectral triple
and a strongly dense-core $\beta$-Gevrey regular $L^p$-spectral triple,
and present two basic examples.
\begin{defn}\label{cldef1}
Let $1\leq p<\infty$, and let
$$D:\operatorname{Dom}(D)\subseteq L^p(\mu)\to L^p(\mu)$$
be a densely defined closed operator. We define an operator
$\delta_D$ on $\mathcal B(L^p(\mu))$ as follows. Its domain is
\begin{align*}
\operatorname{Dom}(\delta_D)
&=\Big\{
	T\in \mathcal B(L^p(\mu)) :
	T(\operatorname{Dom}(D))\subseteq \operatorname{Dom}(D)
	\text{ and }\\
&[D,T]\big|_{\operatorname{Dom}(D)}
	\text{ extends to a bounded operator on } L^p(\mu)
	\Big\}.
\end{align*}
For $T\in\operatorname{Dom}(\delta_D)$, define $\delta_D(T)$ to be
the unique bounded extension of
	$[D,T]\big|_{\operatorname{Dom}(D)}$ to $L^p(\mu)$.
The iterated domains are defined recursively by
$\operatorname{Dom}(\delta_D^1)=\operatorname{Dom}(\delta_D)$,
and, for $n\geq 1$,
$$\operatorname{Dom}(\delta_D^{n+1})=\{T\in\operatorname{Dom}(\delta_D^n):
\delta_D^n(T)\in\operatorname{Dom}(\delta_D)\}.$$
On this domain, we set
$\delta_D^{n+1}(T)=\delta_D(\delta_D^n(T))$.
\end{defn}

\begin{lemma}\label{closed-derivation}
The operator $\delta_D$ defined in Definition~\ref{cldef1} is a closed
derivation on $\mathcal B(L^p(\mu))$. More precisely,
$\operatorname{Dom}(\delta_D)$ is a subalgebra of
$\mathcal B(L^p(\mu))$, and for all $A,B\in\operatorname{Dom}(\delta_D)$,
$$\delta_D(AB)=\delta_D(A)B+A\delta_D(B).$$
\end{lemma}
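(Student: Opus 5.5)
We verify three things in turn: that $\operatorname{Dom}(\delta_D)$ is a subalgebra, that the Leibniz rule holds, and that $\delta_D$ is closed. Linearity is immediate. If $A,B\in\operatorname{Dom}(\delta_D)$ and $\lambda\in\mathbb C$, then $A+\lambda B$ maps $\operatorname{Dom}(D)$ into itself. On $\operatorname{Dom}(D)$ we have $[D,A+\lambda B]=[D,A]+\lambda[D,B]$, which is the restriction of the bounded operator $\delta_D(A)+\lambda\delta_D(B)$. Since $\operatorname{Dom}(D)$ is dense, the bounded extension is unique, so $\delta_D$ is a well-defined linear map. The identity belongs to the domain with $\delta_D(1)=0$.

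For products, take $A,B\in\operatorname{Dom}(\delta_D)$. Since $B$ maps $\operatorname{Dom}(D)$ into $\operatorname{Dom}(D)$ and then $A$ does as well, $AB$ preserves $\operatorname{Dom}(D)$. For $\xi\in\operatorname{Dom}(D)$ we have $B\xi\in\operatorname{Dom}(D)$, so
$$DAB\xi-ABD\xi=(DA-AD)B\xi+A(DB-BD)\xi=\delta_D(A)B\xi+A\delta_D(B)\xi.$$
The right-hand side is the restriction to $\operatorname{Dom}(D)$ of the bounded operator $\delta_D(A)B+A\delta_D(B)$. By density and uniqueness of the extension, $AB\in\operatorname{Dom}(\delta_D)$ and the Leibniz rule holds.

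Closedness is the main point. Suppose $T_n\in\operatorname{Dom}(\delta_D)$ with $T_n\to T$ and $\delta_D(T_n)\to S$ in operator norm. We must show that $T\in\operatorname{Dom}(\delta_D)$ and $\delta_D(T)=S$. Fix $\xi\in\operatorname{Dom}(D)$. Then $T_n\xi\to T\xi$, and
$$D(T_n\xi)=T_nD\xi+\delta_D(T_n)\xi\to TD\xi+S\xi.$$
Since $D$ is closed, this gives $T\xi\in\operatorname{Dom}(D)$ and $DT\xi=TD\xi+S\xi$. Hence $T$ preserves $\operatorname{Dom}(D)$, and $[D,T]=S$ on $\operatorname{Dom}(D)$. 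Because $S$ is bounded, it is the unique bounded extension of $[D,T]\big|_{\operatorname{Dom}(D)}$. Therefore $T\in\operatorname{Dom}(\delta_D)$ and $\delta_D(T)=S$. The only real ingredient is the closedness of $D$, applied to the sequence $T_n\xi$.
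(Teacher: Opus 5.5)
Your proof is correct and follows essentially the same route as the paper: the subalgebra property and the Leibniz rule come from the identity $[D,AB]=[D,A]B+A[D,B]$ on $\operatorname{Dom}(D)$, and closedness comes from applying the closedness of $D$ to the sequence $T_n\xi$. You also make explicit two points the paper leaves implicit: that $\delta_D$ is well defined because the bounded extension from the dense subspace $\operatorname{Dom}(D)$ is unique, and that $AB$ preserves $\operatorname{Dom}(D)$.
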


\begin{proof}
The Leibniz rule follows immediately from the identity
	$$[D,AB]=[D,A]B+A[D,B]$$
on $\operatorname{Dom}(D)$. This also shows that
$\operatorname{Dom}(\delta_D)$ is a subalgebra of
$\mathcal B(L^p(\mu))$.
It remains to prove that $\delta_D$ is closed. Suppose that
$T_n\in\operatorname{Dom}(\delta_D)$, $T_n\to T$ and
$\delta_D(T_n)\to S$ in operator norm. Then, for $x\in\operatorname{Dom}(D)$,
$$DT_nx=T_nDx+\delta_D(T_n)x.$$ 
Since $T_nx\to Tx$ and
$$T_nDx+\delta_D(T_n)x\to TDx+Sx,$$
the closedness of $D$ implies that $Tx\in\operatorname{Dom}(D)$ and
$DTx=TDx+Sx$. Hence, $[D,T]x=Sx$ and  $x\in\operatorname{Dom}(D)$.
Therefore $T\in\operatorname{Dom}(\delta_D)$ and $\delta_D(T)=S$. 
\end{proof}

The closed derivation $\delta_D$ introduced above encodes the bounded
commutator condition in the classical definition of spectral triples
\cite[Definition 1.1]{Latr}. Motivated by this classical framework,
we introduce the following notion of an $L^p$-spectral triple.

\begin{defn}\label{def43}
Let $p\in [1,\infty)$. An $L^p$-spectral triple
$(\mathcal A,L^p(\mu),D)$
consists of a unital $L^p$-operator algebra $\mathcal A$,
a faithful unital  representation 
$\varphi:\mathcal A\to\mathcal B(L^p(\mu))$,
and a densely defined closed operator
$D:\operatorname{Dom}(D)\subseteq L^p(\mu)\to L^p(\mu)$
satisfying the following conditions:
\begin{enumerate}
\item[(1)] The operator $D$ has compact resolvent, that is,
	$\rho(D)\neq\emptyset$ and there exists $\lambda_0\in\rho(D)$ such that
	$(D-\lambda_0 I)^{-1}\in\mathcal K(L^p(\mu))$,
		
\item[(2)] The set $\mathcal A_D=\left\{a\in\mathcal A:\varphi(a)\in\operatorname{Dom}(\delta_D)\right\}$
  is a norm-dense subalgebra of $\mathcal A$.
	\end{enumerate}
\end{defn}

\begin{remark}
Our definition gives a slightly different formulation of an
$L^p$-spectral triple from that in \cite[Definition 3.2]{Del}, and is adapted to the
standard theory of closed operators with compact resolvent on Banach
spaces. Since $D$ is unbounded, the commutator $[D,\varphi(a)]$ is not
automatically a bounded operator on $L^p(\mu)$. We therefore formulate
the bounded commutator condition in terms of the derivation $\delta_D$,
which makes explicit the required domain condition and the bounded
extension of the commutator. The compactness condition is stated in the standard compact-resolvent
form. Namely, we require $\rho(D)\neq\varnothing$ and
$(D-\lambda_0 I)^{-1}\in\mathcal K(L^p(\mu))$ for some
$\lambda_0\in\rho(D)$. By the resolvent identity, this is equivalent to
compactness of $(D-\lambda I)^{-1}$ for all $\lambda\in\rho(D)$.
\end{remark}

\medskip
Recall that a classical spectral triple $(\mathcal A,\mathcal H,D)$   on a
Hilbert space is called \emph{regular} if, for each $a\in\mathcal A$ and each
$k\in\mathbb N$, both $a$ and $[D,a]$ belong to
$\operatorname{Dom}(\delta^k)$, where
$$\delta(T)=[|D|,T].$$
Equivalently, the algebra generated by $\mathcal A$ and $[D,\mathcal A]$
is contained in the smooth domain $\bigcap_{n=1}^{\infty}\operatorname{Dom}(\delta^n)$;
see \cite[Definition 10.10]{Gracia}.
In the $L^p$-setting, the representation acts on the Banach space
$L^p(\mu)$. For $p\neq 2$, arbitrary closed unbounded operators generally do not have
a canonical absolute value $|D|$ analogous to the Hilbert space case. We therefore work directly with the closed derivation $\delta_D(T)=[D,T]$,
introduced in Definition \ref{cldef1}. This allows us to formulate Gevrey regularity in terms of estimates for the iterated commutators $\delta_D^n(\cdot)$.

\begin{defn}\label{def35}
Let $(\mathcal A,L^p(\mu),D)$ be an $L^p$-spectral triple with a faithful unital representation
$\varphi:\mathcal A\to\mathcal B(L^p(\mu))$ and the derivation $\delta_D$ defined in Definition \ref{cldef1}. Define the smooth domain of $\delta_D$ inside $\mathcal A$ by
$$\mathcal A^\infty=\left\{a\in\mathcal A:\varphi(a)\in
\bigcap_{n=1}^{\infty}\operatorname{Dom}(\delta_D^n)\right\}.$$
Fix \(\beta>0\) and \(C>0\).  
We define the \emph{Gevrey seminorm $L_{\beta, C}(a)$}  by
$$ L_{\beta, C}(a) = \sum_{n=1}^{\infty} \frac{\left\| \delta_D^n(\varphi(a)) \right\|_{\mathcal{B}(L^p(\mu))}}{C^n(n!)^{1/\beta}}. $$
We define the set of elements with finite Gevrey seminorm as
$$\mathcal{A}_{\beta, C} = \left\{a\in\mathcal A^\infty:L_{\beta, C}(a)	<\infty\right\}. $$
\end{defn}
\begin{remark}
The spaces $\mathcal A_{\beta,C}$ are increasing in the parameter $C$.
Indeed, if $0<C_1<C_2$ and $a\in\mathcal A_{\beta,C_1}$, then
$L_{\beta,C_2}(a)\leq L_{\beta,C_1}(a)<\infty$.
Thus $a\in\mathcal A_{\beta,C_2}$, and consequently
$\mathcal A_{\beta,C_1}\subseteq \mathcal A_{\beta,C_2}$.\
If $0<\beta\leq 1$, then $\mathcal A_{\beta,C}$ is a subalgebra of $\mathcal A$. Indeed, Let $a,b\in\mathcal A_{\beta,C}$. Since $a,b\in\mathcal A^\infty$ and
$\delta_D$ is a derivation, the iterated Leibniz formula gives
$$\delta_D^n(\varphi(ab))=\sum_{j=0}^{n}\binom{n}{j}
\delta_D^j(\varphi(a))\delta_D^{n-j}(\varphi(b)).$$
Hence
$$\|\delta_D^n(\varphi(ab))\|\leq\sum_{j=0}^{n}\binom{n}{j}\|\delta_D^j(\varphi(a))\|
\|\delta_D^{n-j}(\varphi(b))\|.$$
For $0<\beta\leq 1$, we have
$$\frac{\binom{n}{j}}{(n!)^{1/\beta}}
\leq\frac{1}{(j!)^{1/\beta}((n-j)!)^{1/\beta}}.$$
Therefore
$$\frac{\|\delta_D^n(\varphi(ab))\|}{C^n(n!)^{1/\beta}}
\leq\sum_{j=0}^{n}\frac{\|\delta_D^j(\varphi(a))\|}{C^j(j!)^{1/\beta}}
\frac{\|\delta_D^{n-j}(\varphi(b))\|}{C^{n-j}((n-j)!)^{1/\beta}}.$$
Summing over $n\geq 1$ gives
$$L_{\beta,C}(ab)\leq
\|\varphi(a)\|L_{\beta,C}(b)+\|\varphi(b)\|L_{\beta,C}(a)+L_{\beta,C}(a)L_{\beta,C}(b).$$
Since $a,b\in\mathcal A_{\beta,C}$, the right-hand side is finite. Thus
$ab\in\mathcal A_{\beta,C}$.
Linearity follows immediately from the linearity of $\delta_D^n$ and the
triangle inequality. Hence $\mathcal A_{\beta,C}$ is a subalgebra of
$\mathcal A$.
\end{remark}

\begin{defn}
We say that the $L^p$-spectral triple $(\mathcal A,L^p(\mu),D)$ is
\emph{strongly dense-core $\beta$-Gevrey regular} if there exists a norm-dense
subalgebra $\mathcal B\subseteq\mathcal A$ such that
$$\mathcal B\subseteq\mathcal A_{\beta,C}\quad\text{for every } C>0.$$
\end{defn}

\medskip 
The next example illustrates that our notion of an $L^p$-spectral triple
is compatible with classical examples arising in the theory of compact
Banach spectral triples. Recall that, in the sense of
\cite[Definition 5.10]{Arha1}, compact Banach spectral triples are built
from representations of algebras on Banach spaces, together with
unbounded closed bisectorial operators satisfying a suitable bounded
holomorphic functional calculus. The triple
$(C(\mathbb T), L^p(\mathbb T), -i\frac{d}{dx})$ is a basic example
in that framework; see \cite[section 8.1]{Arha}. The following proposition
shows that this example also satisfies our definition of an
$L^p$-spectral triple.

\begin{theorem}\label{th1}
Let $1\leq p<\infty$. Consider the algebra $\mathcal A=C(\mathbb T)$ and the Banach space $L^p(\mu)=L^p(\mathbb T)$, where $\mathbb T=\mathbb R/2\pi\mathbb Z$ is equipped with the normalized Lebesgue measure $\mu = \frac{dx}{2\pi}$. Let $C(\mathbb T)$ act on $L^p(\mathbb T)$ via the multiplication operator
$ M_a\xi=a\xi$, for $a\in C(\mathbb T) $ and $\xi\in L^p(\mathbb T)$.
Let $D = -i \frac{d}{dx}$  be the differential operator. Then, the domain of $D$ is 
\begin{align*}
\operatorname{Dom}(D) 
&=W^{1,p}(\mathbb T)\\
&=\{f\in L^p(0, 2\pi):f(0)=f(2\pi),~f~\text{is absolutely continuous}, f^\prime\in L^p(0, 2\pi)\}.
\end{align*}
Moreover, for every $\beta>0$, $(C(\mathbb{T}), L^p(\mathbb{T}), D)$ is a strongly  dense-core $\beta$-Gevrey regular $L^p$-spectral triple.
\end{theorem}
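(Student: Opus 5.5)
Three things have to be checked, in order: $D$ is a densely defined closed operator with compact resolvent on $L^p(\mathbb T)$; the set $\mathcal A_D$ is a norm-dense subalgebra of $C(\mathbb T)$; and some norm-dense subalgebra $\mathcal B$ lies in $\mathcal A_{\beta,C}$ for every $C>0$. The natural choice is $\mathcal B$ equal to the trigonometric polynomials, which also settles density in the second point.

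For closedness and the domain, I would identify $\operatorname{Dom}(D)=W^{1,p}(\mathbb T)$ directly. Take $f_n\to f$ and $f_n'\to g$ in $L^p$, and write
$$f_n(x)=f_n(0)+\int_0^x f_n'.$$
Averaging over $x$ shows that $f_n(0)$ converges. Passing to the limit gives $f$ absolutely continuous and periodic with $f'=g$. Density follows because trigonometric polynomials are dense in $L^p$ for $1\le p<\infty$.

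For the resolvent, $\lambda_0=i\notin\sigma(D)$ because the equation $-if'-if=g$ can be solved explicitly by a periodic variation-of-constants formula. For example, for $\lambda\notin\mathbb Z$ the resolvent is convolution with a bounded kernel $k_\lambda(x)=c_\lambda e^{i\lambda x}$ on $[0,2\pi)$. Convolution with an $L^\infty\subseteq L^{p'}$ kernel on a finite measure space is compact on $L^p$: for $p>1$ I would use the Hilbert--Schmidt-type argument of approximating the kernel by trigonometric polynomials, which gives norm limits of finite-rank operators. Alternatively, for every $p$ one can use Rellich--Kondrachov/Arzel\`a--Ascoli, since the resolvent maps $L^p$ boundedly into $W^{1,p}$, which embeds compactly in $C(\mathbb T)$ and hence in $L^p$. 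Either route is standard.

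For the derivation, take $a\in C^1(\mathbb T)$ and $\xi\in W^{1,p}$. Then $a\xi\in W^{1,p}$ and
$$[D,M_a]\xi=-ia'\xi=M_{-ia'}\xi.$$
This extends boundedly with norm $\|a'\|_\infty$, since multiplication operators have norm equal to the sup norm. So $C^1(\mathbb T)\subseteq\mathcal A_D$, giving condition (2), and $\mathcal A_D$ is a subalgebra by Lemma~\ref{closed-derivation}. Faithfulness and unitality of $a\mapsto M_a$ are clear, since $\|M_a\|=\|a\|_\infty$. Iterating, each $\delta_D^n(M_a)=M_{(-i)^na^{(n)}}$ is again a multiplication operator by a smooth function, so $C^\infty(\mathbb T)\subseteq\mathcal A^\infty$.

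Now let $a=\sum_{|k|\le N}c_ke^{ikx}$. Then
$$\|\delta_D^n(M_a)\|=\|a^{(n)}\|_\infty\le N^n\sum_k|c_k|,$$
so
$$L_{\beta,C}(a)\le \sum_k|c_k|\sum_{n\ge1}\frac{(N/C)^n}{(n!)^{1/\beta}}<\infty$$
for every $C>0$ and $\beta>0$, because $(n!)^{1/\beta}$ grows superexponentially. Trigonometric polynomials form a subalgebra that is dense in $C(\mathbb T)$ by Stone--Weierstrass or Fej\'er, which proves strong dense-core $\beta$-Gevrey regularity.

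The main obstacle I expect is the compact-resolvent step for $p=1$, where duality and Hilbert--Schmidt arguments are unavailable. Here I would use the explicit resolvent kernel, which is a continuous function away from a jump, together with the Kolmogorov--Riesz compactness criterion in $L^1$: translates of convolutions with a bounded kernel are uniformly $L^1$-equicontinuous. A secondary technical point is verifying that $M_a$ maps $\operatorname{Dom}(D)$ into itself, which needs the product rule for absolutely continuous functions.
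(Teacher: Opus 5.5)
Your proposal is correct and follows essentially the paper's route. The common steps are an explicit periodic solution of the resolvent equation, compactness of the resolvent via $W^{1,p}(\mathbb T)\hookrightarrow L^p(\mathbb T)$, and trigonometric polynomials as the dense core with $\|\delta_D^n(M_a)\|=\|a^{(n)}\|_\infty\le N^n\sum_k|c_k|$. The only differences are that you prove closedness of $D$ directly instead of citing it, and that you place all of $C^1(\mathbb T)$ in $\mathcal A_D$ rather than just the trigonometric polynomials.

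One side claim needs correcting. $W^{1,1}(\mathbb T)$ does \emph{not} embed compactly into $C(\mathbb T)$. The equicontinuity bound $|f(x)-f(y)|\le|x-y|^{1-1/p}\|f'\|_p$ degenerates at $p=1$. Concretely, ramps of slope $n$ form a bounded sequence in $W^{1,1}$ with no uniformly convergent subsequence. So your ``for every $p$'' Arzel\`a--Ascoli route fails at $p=1$.

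The compact embedding $W^{1,1}(\mathbb T)\hookrightarrow L^1(\mathbb T)$, which is what the paper uses, still holds, and your Kolmogorov--Riesz fallback is also valid. In fact $p=1$ is no obstacle at all. Approximate the resolvent kernel $k_\lambda\in L^1(\mathbb T)$ in $L^1$-norm by trigonometric polynomials $k_N$, and apply Young's inequality $\|(k_\lambda-k_N)*f\|_p\le\|k_\lambda-k_N\|_1\|f\|_p$. This exhibits the resolvent as a norm limit of finite-rank operators for every $1\le p<\infty$ at once.
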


\begin{proof}

The differential operator $D=-i\frac{d}{dx}$  is closed with the $\operatorname{Dom}(D)=W^{1,p}(\mathbb T)$ can be found in \cite[page 29]{Arha}.
We first verify that $(C(\mathbb{T}), L^p(\mathbb{T}), D)$ forms an $L^p$-spectral triple.
Since the torus $\mathbb T$ is compact, every continuous function $a\in C(\mathbb T)$ is essentially bounded, meaning $a\in L^\infty(\mathbb T)$. 
By \cite[Proposition 4.10, page 31]{EnN00} any multiplication operator $M_a$ is bounded on $L^p(\mathbb T)$ with its operator norm 
$$\|M_a\|_{\mathcal B(L^p(\mathbb T))} = \|a\|_\infty.$$
Hence, the map $\varphi:C(\mathbb T)\to\mathcal B(L^p(\mathbb T))$ defined by 
$a\mapsto M_a$ is a faithful unital representation.
For any $\lambda\in\mathbb C$, the resolvent equation $(D-\lambda I)u=f$ is equivalent to
\begin{equation} \label{eq41}
	u'-i\lambda u=if.
\end{equation}
Solving this on the interval $[0,2\pi]$ yields
\begin{equation} \label{eq43}
u(x) = e^{i\lambda x} \left( c+i\int_0^x e^{-i\lambda t}f(t)\,dt \right),
\end{equation}
where $c\in\mathbb C$ is a constant. Imposing the periodic boundary condition $u(2\pi)=u(0)$ gives
\begin{equation} \label{eq42}
(1-e^{2\pi i\lambda})c = i e^{2\pi i\lambda} \int_0^{2\pi} e^{-i\lambda t}f(t)\,dt. 
\end{equation}
If $\lambda\notin\mathbb Z$, then $e^{2\pi i\lambda}\neq 1$, which uniquely determines the constant $c$ and thus provides a unique periodic solution $u\in W^{1,p}(\mathbb T)$. 
Furthermore, by H\"older's inequality, for any $x \in [0,2\pi]$ we 
\begin{align*}
\left| \int_0^x e^{-i\lambda t}f(t)dt \right| 
&\leq\int_0^{2\pi} \left| e^{-i\lambda t} \right| |f(t)|dt \\	
& \leq C_1\left( \int_0^{2\pi} |f(t)|^pdt \right)^{1/p} \left( \int_0^{2\pi} 1^qdt \right)^{1/q} = C_2 \|f\|_p. 
\end{align*}
Thus, by  \eqref{eq42}
$$|c|=\left| \frac{i e^{2\pi i\lambda}}{1-e^{2\pi i\lambda}}\right|\left|\int_0^{2\pi} e^{-i\lambda t}f(t)\,dt \right|\leq C_3\|f\|_p.$$
Consequently, by  \eqref{eq43}
$$|u(x)|\leq\left|e^{i\lambda x}\right|\left(|c|+\left|\int_0^x e^{-i\lambda t}f(t)dt \right|\right)\leq C_4\|f\|_p.$$
So,
$$\|u\|_p
=\left(\int_0^{2\pi}|u(x)|^p dx\right)^{1/p}\leq C_4\|f\|_p(2\pi)^{1/p}=C_\lambda \|f\|_p.$$
Combining this equality with  \eqref{eq41} yields
$$\|u'\|_p\leq |\lambda|\|u\|_p+\|f\|_p\leq C_\lambda'\|f\|_p.$$ 
Hence  $\|u\|_{W^{1,p}} \leq C_\lambda''\|f\|_p$. This proves that for every $\lambda\notin\mathbb Z$, the inverse operator $(D-\lambda I)^{-1}:L^p(\mathbb T)\to W^{1,p}(\mathbb T)$ is bounded, implying $\mathbb C\setminus\mathbb Z\subseteq \rho(D)$. 
Conversely, for every $k\in\mathbb Z$, the element $e_k(x)=e^{ikx}$ satisfies $De_k = k e_k$. Therefore, $\sigma(D)=\mathbb Z$ and $\rho(D)=\mathbb C\setminus\mathbb Z$, which is strictly non-empty.
To show that $D$ has a compact resolvent, choose $\lambda_0=\frac12 \in \rho(D)$. The resolvent $(D-\lambda_0 I)^{-1}$ maps $L^p(\mathbb T)$ boundedly into $W^{1,p}(\mathbb T)$. 
By the Rellich-Kondrachov theorem \cite[Theorem 6.3]{Adams}, the embedding 
$W^{1,p}(\mathbb T)\hookrightarrow L^p(\mathbb T)$ is 
is compact for every $1\le p<\infty$. 
Since the composition of a bounded operator with a compact embedding is  compact, we conclude that $(D-\lambda_0 I)^{-1}$ is compact.
Define the domain of the commutator derivation as $$\mathcal A_D = \{a\in C(\mathbb T):M_a\in\operatorname{Dom}(\delta_D)\}.$$ Let $\mathcal P$ denote the algebra of trigonometric polynomials
$$\mathcal P=\left\{a(x)=\sum_{|k|\leq N}a_k e^{ikx}: N\in\mathbb N,\ a_k\in\mathbb C \right\}.$$
For any $a\in\mathcal P$, we have $a\in C^\infty(\mathbb T)$. For any 
$\xi\in W^{1,p}(\mathbb T)$, we have
$$[D,M_a]\xi = D(a\xi)-aD\xi = -i(a'\xi+a\xi')+ia\xi'=-ia'\xi.$$
Thus, on $\operatorname{Dom}(D)$, we have 
\begin{equation}\label{eq44}
[D,M_a]=M_{-ia'}.
\end{equation} 
Since $a'\in C(\mathbb T)\subset L^\infty(\mathbb T)$, the operator $M_{-ia'}$ is bounded on $L^p(\mathbb T)$. This implies $\mathcal P\subseteq\mathcal A_D$. 
By the Stone-Weierstrass theorem, $\mathcal P$ is norm-dense in $C(\mathbb T)$. Additionally, $\mathcal A_D$ is a subalgebra because the derivation satisfies the Leibniz rule 
$$\delta_D(M_aM_b) = \delta_D(M_a)M_b+M_a\delta_D(M_b),$$ 
which remains bounded for $a,b\in\mathcal A_D$. Therefore, 
$(C(\mathbb T),L^p(\mathbb T),D)$ is a $L^p$-spectral triple.
	
Finally  we prove that  $(C(\mathbb T),L^p(\mathbb T),D)$ is strongly  dense-core $\beta$-Gevrey regular. Fix $\beta>0$ and $C>0$. It suffices to show that the Gevrey algebra $\mathcal A_{\beta,C}$ contains the norm-dense subalgebra $\mathcal P$. Let $a\in\mathcal P$ with $a(x)=\sum_{|k|\leq N}a_k e^{ikx}$. By Eq.\eqref{eq44}, the $n$-th iterated commutator is given by
$$\delta_D^n(M_a)=M_{(-i)^n a^{(n)}}.$$
Hence
$$\|\delta_D^n(M_a)\|_{\mathcal B(L^p(\mathbb T))}=\|a^{(n)}\|_\infty.$$
Since $a^{(n)}(x)=\sum_{|k|\leq N}(ik)^n a_k e^{ikx}$, we get
$$\|a^{(n)}\|_\infty\leq \sum_{|k|\leq N}|k|^n|a_k|\leq A_a N^n,$$
where $A_a=\sum\limits_{|k|\leq N}|a_k|$ is a constant independent of $n$.
Consequently, the Gevrey semi-norm 
$$L_{\beta,C}(a)=\sum_{n=1}^{\infty}\frac{\|\delta_D^n(M_a)\|_{\mathcal B(L^p(\mathbb T))}}{C^n(n!)^{1/\beta}}\leq A_a\sum_{n=1}^{\infty}\frac{(N/C)^n}{(n!)^{1/\beta}}.$$
this series converges because   
$$\frac{u_{n+1}}{u_n}=\frac{N/C}{(n+1)^{1/\beta}}\longrightarrow 0\quad\text{as} \ n\to\infty.$$
Thus, $L_{\beta,C}(a)<\infty$, meaning $a\in\mathcal A_{\beta,C}$. This proves that $\mathcal P\subseteq\mathcal A_{\beta,C}$. Because $\mathcal P$ is dense in $C(\mathbb T)$, the triple $(C(\mathbb T),L^p(\mathbb T),D)$ is  strongly  dense-core $\beta$-Gevrey regular for any arbitrary $\beta>0$.
\end{proof}

Motivated by Connes' classical construction of spectral triples \cite[page 208]{Rief4} from
proper length functions on discrete groups, we obtain the following
$L^p$-analogue.

\begin{theorem}\label{pro5}
Let $1\le p<\infty$, and let $G$ be a countable discrete group
equipped with a proper length function $\ell:G\to[0,\infty)$. Let
$D_\ell:\operatorname{Dom}(D_\ell)\subseteq \ell^p(G)\to \ell^p(G)$
be the multiplication operator
$(D_\ell\xi)(x)=\ell(x)\xi(x)$
with domain
$$\operatorname{Dom}(D_\ell)=\{\xi\in \ell^p(G):\ell\xi\in \ell^p(G)\}.$$
Then, for every $\beta>0$, $(F_r^p(G),\ell^p(G),D_\ell)$
is a  strongly  dense-core $\beta$-Gevrey regular  $L^p$-spectral triple with the derivation 
$\delta_{\ell}(\cdot)=[D_\ell, \cdot]$.
\end{theorem}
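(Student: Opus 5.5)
I will verify the two conditions of Definition~\ref{def43} and then exhibit the dense core $\mathcal B=\lambda_p(\mathbb C[G])$. The representation is the inclusion $F_r^p(G)\subseteq\mathcal B(\ell^p(G))$, which is faithful and unital.

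First, the operator-theoretic properties of $D_\ell$. Since $\ell$ is real-valued, $D_\ell$ is a maximal multiplication operator, hence closed; it is densely defined because $\mathbb C[G]\subseteq\operatorname{Dom}(D_\ell)$. For $\lambda\notin\mathbb R$ the resolvent $(D_\ell-\lambda I)^{-1}$ is multiplication by $(\ell(x)-\lambda)^{-1}$, which is bounded by $|\operatorname{Im}\lambda|^{-1}$, so $\rho(D_\ell)\neq\emptyset$. Take $\lambda_0=i$. Properness of $\ell$ makes each ball $B_R$ finite. Hence $(D_\ell-i)^{-1}$ is the operator-norm limit of the finite-rank truncations $\chi_{B_R}(\ell-i)^{-1}$, with error at most $\sup_{\ell(x)>R}|\ell(x)-i|^{-1}\le R^{-1}\to0$. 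It is therefore compact, which gives condition (1).

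Next, the commutator computation, which is the heart of the argument. For $g\in G$, $\xi\in\operatorname{Dom}(D_\ell)$ and $x\in G$,
\[
([D_\ell,\lambda_p(\delta_g)]\xi)(x)=(\ell(x)-\ell(g^{-1}x))\,\xi(g^{-1}x).
\]
The triangle inequality gives $|\ell(x)-\ell(g^{-1}x)|\le\ell(g)$. Two consequences follow. First, $\lambda_p(\delta_g)$ preserves $\operatorname{Dom}(D_\ell)$, since $\ell(x)|\xi(g^{-1}x)|\le(\ell(g^{-1}x)+\ell(g))|\xi(g^{-1}x)|$. Second, the commutator equals $M_{c_g}\lambda_p(\delta_g)$, where $c_g(x)=\ell(x)-\ell(g^{-1}x)$ and $\|c_g\|_\infty\le\ell(g)$.

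The operator $M_{c_g}\lambda_p(\delta_g)$ is again a bounded multiplication operator composed with a translation. Iterating, one has to check that the domain is preserved at each stage. Multiplication by the bounded function $c_g$ commutes with $D_\ell$, so this is clear, and we obtain
\[
\delta_\ell^n(\lambda_p(\delta_g))=M_{c_g^n}\lambda_p(\delta_g),\qquad \|\delta_\ell^n(\lambda_p(\delta_g))\|\le\ell(g)^n.
\]
By linearity, for $f\in\mathbb C[G]$ with $\operatorname{supp} f\subseteq B_R$ we get
\[
\|\delta_\ell^n(\lambda_p(f))\|\le R^n\|f\|_1 .
\]
So $\lambda_p(\mathbb C[G])\subseteq\mathcal A^\infty$, and in particular $\lambda_p(\mathbb C[G])\subseteq\mathcal A_D$. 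The set $\mathcal A_D$ is a subalgebra by Lemma~\ref{closed-derivation}, and it is norm-dense because $\lambda_p(\mathbb C[G])$ is dense in $F_r^p(G)$. This gives condition (2).

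Finally, the Gevrey estimate, exactly as in Theorem~\ref{th1}. For every $\beta>0$ and $C>0$,
\[
L_{\beta,C}(\lambda_p(f))\le\|f\|_1\sum_{n\ge1}\frac{(R/C)^n}{(n!)^{1/\beta}}<\infty
\]
by the ratio test. Hence $\mathcal B=\lambda_p(\mathbb C[G])$ is a norm-dense subalgebra contained in $\mathcal A_{\beta,C}$ for all $C>0$.

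The main point needing care is the iterated domain bookkeeping in the commutator step. At each stage one must show that $\delta^n_\ell(\lambda_p(\delta_g))$ maps $\operatorname{Dom}(D_\ell)$ into itself and has a bounded commutator with $D_\ell$. This is handled by the explicit form $M_{c_g^n}\lambda_p(\delta_g)$ together with the observation that bounded multiplication operators commute with $D_\ell$ on its domain.
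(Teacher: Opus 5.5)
Your proposal is correct and follows essentially the same route as the paper: you use the same inclusion representation, the same compactness of the resolvent via finite-rank truncations (you take $\lambda_0=i$, the paper takes $\lambda_0=-1$ with $(1+\ell)^{-1}$), the same commutator formula $[D_\ell,\lambda_p(g)]=M_{c_g}\lambda_p(g)$ with $\|c_g\|_\infty\le\ell(g)$, the same induction $\delta_\ell^n(\lambda_p(g))=M_{c_g^n}\lambda_p(g)$, and the same ratio-test bound on $\|f\|_1\sum_n (R/C)^n/(n!)^{1/\beta}$. Your explicit remark that bounded multiplication operators commute with $D_\ell$ on $\operatorname{Dom}(D_\ell)$ justifies the inductive domain step a bit more transparently than the paper, which only asserts it.
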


\begin{proof}
Let $\lambda_p:G\to\mathcal B(\ell^p(G))$ be the left regular
representation. $F_r^p(G)$ is a unital $L^p$-operator algebra with  unit 
$\lambda_p(e)=I_{\ell^p(G)}$, and the natural inclusion
$$\varphi:F_r^p(G)\hookrightarrow\mathcal B(\ell^p(G))$$
is a unital isometric representation. The space $C_c(G)$ of finitely supported functions is contained in
$\operatorname{Dom}(D_\ell)$ and is dense in $\ell^p(G)$. Hence
$D_\ell$ is densely defined. Moreover, as a multiplication operator by a real-valued function, $D_\ell$ is closed. 
We first verify the compact resolvent condition. Since $\ell\geq 0$, the operator
$D_\ell+I$ is bijective from $\operatorname{Dom}(D_\ell)$ onto $\ell^p(G)$, with inverse
$$(D_\ell+I)^{-1}=M_m,\qquad m(x)=\frac{1}{1+\ell(x)}.$$
Indeed, for $\eta\in\ell^p(G)$, setting
$$\xi(x)=\frac{\eta(x)}{1+\ell(x)}$$
gives $\xi\in\operatorname{Dom}(D_\ell)$ and $(D_\ell+I)\xi=\eta$.
Thus $-1\in\rho(D_\ell)$.
Since $\ell$ is proper, the sets $\{x\in G:\ell(x)\le N\}$ are finite. Let
$m_N=m\mathbf\chi_{\{\ell\le N\}}$. Then
the multiplication operator $M_{m_N}$ is finite rank and 
$$\|M_m-M_{m_N}\|=\sup_{\ell(x)>N}\frac{1}{1+\ell(x)}\le\frac{1}{1+N}\to 0.$$
Therefore $(D_\ell+I)^{-1}$ is compact.

Next we verify  the commutator condition holds on a dense
subalgebra. We canonically identify each group element $g \in G$ with the point mass $\delta_g \in \ell^1(G)$. Thus, for $g\in G$, the operator $\lambda_p(g)$ is given by
$$(\lambda_p(g)\xi)(x)=\xi(g^{-1}x)$$
for any $\xi \in \ell^p(G)$. If $\xi\in\operatorname{Dom}(D_\ell)$, using  the triangle inequality for the length function we get  $\ell(x) = \ell(g \cdot g^{-1}x) \le \ell(g) + \ell(g^{-1}x)$. Hence
$$\ell(x)|\xi(g^{-1}x)|\le\ell(g)|\xi(g^{-1}x)|+\ell(g^{-1}x)|\xi(g^{-1}x)|.$$
Taking the $\ell^p$-norm on both sides, we obtain	$$\|D_\ell\lambda_p(g)\xi\|_p = \|\ell\lambda_p(g)\xi\|_p\le\ell(g)\|\xi\|_p+\|\ell\xi\|_p<\infty.$$
Thus, $\lambda_p(g)\operatorname{Dom}(D_\ell)\subset\operatorname{Dom}(D_\ell)$. Moreover,  for $\xi\in\operatorname{Dom}(D_\ell)$, 
\begin{equation}\label{eqcl12}
([D_\ell,\lambda_p(g)]\xi)(x) = (\ell(x)-\ell(g^{-1}x))\xi(g^{-1}x).
\end{equation}
Since $|\ell(x)-\ell(g^{-1}x)| \le \ell(g)$, we get
$$\|[D_\ell,\lambda_p(g)]\xi\|_p \le \ell(g)\|\xi\|_p.$$
Therefore $[D_\ell,\lambda_p(g)]$  extends to a bounded operator on $\ell^p(G)$.
By linearity, for every $a\in\lambda_p(\mathbb C[G])$,
the operator $a$ preserves $\operatorname{Dom}(D_\ell)$ and
$[D_\ell,a]$ extends to a bounded operator. Hence
$$\lambda_p(\mathbb C[G])
\subseteq(F_r^p(G))_{D_\ell}=\{a\in F_r^p(G): a\in \operatorname{Dom}(\delta_{\ell})\}.$$
Since $\lambda_p(\mathbb C[G])$ is norm dense in $F_r^p(G)$, it follows
that $(F_r^p(G))_{D_\ell}$ is norm dense in $F_r^p(G)$.
$\operatorname{Dom}(\delta_\ell)$ is a subalgebra of
$\mathcal B(\ell^p(G))$ by the identity
$$[D_\ell,ST]=[D_\ell,S]T+S[D_\ell,T],$$
valid initially on $\operatorname{Dom}(D_\ell)$ and then by bounded
extension. Hence $(F_r^p(G))_{D_\ell}$ is a norm-dense subalgebra of
$F_r^p(G)$. Therefore $(F_r^p(G),\ell^p(G),D_\ell)$ is an $L^p$-spectral triple.

Finally we prove  the strongly dense-core $\beta$-Gevrey regularity. Since $\lambda_p(\mathbb C[G])$ is a norm-dense subalgebra of
$F_r^p(G)$, it suffices to prove that
$$\lambda_p(\mathbb C[G])\subseteq\bigcap_{C>0}\mathcal A_{\beta,C}.$$
For $g\in G$, put $h_g(x)=\ell(x)-\ell(g^{-1}x)$.
Then $|h_g(x)|\le\ell(g)$ for all $x\in G$. Hence $h_g\in \ell^\infty(G)$.
By equality \eqref{eqcl12},
$$\delta_\ell(\lambda_p(g))=M_{h_g}\lambda_p(g).$$
We claim that, for every $n\ge 1$,
$$\delta_\ell^n(\lambda_p(g))=M_{h_g^n}\lambda_p(g).$$
Indeed, let $T_{g,n}=M_{h_g^n}\lambda_p(g)$. Since $h_g^n$ is bounded
and $\lambda_p(g)$ preserves $\operatorname{Dom}(D_\ell)$, the operator
$T_{g,n}$ preserves $\operatorname{Dom}(D_\ell)$. Moreover, for
$\xi\in\operatorname{Dom}(D_\ell)$,
$$[D_\ell,T_{g,n}]\xi=M_{h_g^{n+1}}\lambda_p(g)\xi.$$
Since $h_g^{n+1}$ is bounded, this commutator extends to a bounded
operator on $\ell^p(G)$. Therefore $T_{g,n}\in\operatorname{Dom}(\delta_\ell)$,
and the claim follows by induction.
Consequently,
\begin{equation}\label{cheq10}
\|\delta_\ell^n(\lambda_p(g))\|=\|M_{h_g^n}\lambda_p(g)\|\le\|h_g^n\|_\infty\le\ell(g)^n.
\end{equation}
Now, let $f\in \mathbb C[G]$. We can write $\lambda_p(f)$ as a finite sum
$$\lambda_p(f)=\sum_{g\in\operatorname{supp}(f)}f(g)\lambda_p(g).$$
Since $\operatorname{Dom}(\delta_{\ell}^n)$ is a linear space, $\lambda_p(f)\in\operatorname{Dom}(\delta_{\ell}^n)$ for every $n\ge 1$, and we have
$$\delta_{\ell}^n(\lambda_p(f))=\sum_{g\in\operatorname{supp}(f)} f(g)\delta_{\ell}^n(\lambda_p(g)).$$
Thus by \ref{cheq10},
$$\|\delta_{\ell}^n(\lambda_p(f))\|\le\sum_{g\in\operatorname{supp}(f)} |f(g)|\ell(g)^n.$$
Let $R_f=\max\{\ell(g):g\in\operatorname{supp}(f)\}$. Then $\ell(g)^n \le R_f^n$ 
for all $g \in \operatorname{supp}(f)$, which implies
	$$\|\delta_{\ell}^n(\lambda_p(f))\|\le \|f\|_1 R_f^n.$$
Therefore, for any given constant $C>0$, we have
$$L_{\beta,C}(\lambda_p(f)) \le \|f\|_1 \sum_{n=1}^{\infty} \frac{(R_f/C)^n}{(n!)^{1/\beta}}.$$
The series converges by the ratio test, since
$$\frac{(R_f/C)^{n+1}/((n+1)!)^{1/\beta}}{(R_f/C)^n/(n!)^{1/\beta}}
=\frac{R_f/C}{(n+1)^{1/\beta}}\to 0.$$
Thus $L_{\beta,C}(\lambda_p(f))<\infty$ for every $C>0$.
Since the estimate holds for every $C>0$, we have
$$\lambda_p(\mathbb C[G])\subseteq\bigcap_{C>0}\mathcal A_{\beta,C}.$$
Moreover, $\lambda_p(\mathbb C[G])$ is a norm-dense subalgebra of
$F_r^p(G)$.  Therefore $L^p$-spectral triple
$(F_r^p(G),\ell^p(G),D_\ell)$ is strongly  dense-core
$\beta$-Gevrey regular.
\end{proof}

\section{Quantum Metrics on $L^p$-Group Algebras}

In this section, we investigate $p$-quantum compact metric spaces.
Motivated by Rieffel's theory of compact quantum metric spaces, we recall
several preliminary definitions, beginning with a general definition of
states on a unital Banach algebra \cite[Definition 2.6]{Blecher}.
\begin{defn}
Let $\mathcal{A}$ be a unital Banach algebra over $\mathbb{C}$ with unit $1_{\mathcal{A}}$. Let $\mathcal{A}^\prime$ denote the dual space of $\mathcal{A}$. The \emph{state space} of $\mathcal{A}$, denoted by $S(\mathcal{A})$, is defined by
$$\mathcal S(\mathcal A):=\{\omega\in \mathcal A^\prime: \ \|\omega\|=\omega(1_{\mathcal{A}})=1\}$$
An element $\omega \in S(\mathcal{A})$ is called a \emph{state} on $\mathcal{A}$.
\end{defn}

In analogy with Rieffel's construction, we use the Gevrey seminorm
associated with a strongly dense-core $\beta$-Gevrey regular
$L^p$-spectral triple to define a Monge--Kantorovich-type distance on the
state space. In the $L^p$-setting, the Gevrey seminorm plays the role of
a Lip-norm.
\begin{defn}
Let $p\in[1,\infty)$, and let $(\mathcal A,L^p(\mu),D)$ be a strongly dense-core
$\beta$-Gevrey regular $L^p$-spectral triple. Let $S(\mathcal A)$ denote
the state space of $\mathcal A$, and let $L_{\beta,C}$ be the associated
Gevrey seminorm. Define
$$\mathcal A_{\beta,C}=\{a\in\mathcal A^\infty: L_{\beta,C}(a)<\infty\}.$$
Assume that $\mathcal A_{\beta,C}$ is dense in $\mathcal A$ with respect
to the Banach algebra norm $\|\cdot\|_{\mathcal A}$. The
\emph{Monge-Kantorovich metric} associated with
$L_{\beta,C}$ is defined by
$$mk_{\beta,C}(\omega,\psi)=
\sup\left\{|\omega(a)-\psi(a)|:a\in\mathcal A_{\beta,C},\ L_{\beta,C}(a)\leq 1\right\}$$
for all $\omega,\psi\in S(\mathcal A)$.
\end{defn}

We now introduce a $p$-analogue for quantum compact metric spaces for our strongly  dense-core 
$\beta$-Gevrey regular $L^p$-spectral triple (Definition \ref{def35} ).
\begin{defn}\label{de43}
Let $(\mathcal A,L^p(\mu),D)$ be a strongly  dense-core $\beta$-Gevrey regular $L^p$-spectral triple. We say that
$(\mathcal A,L^p(\mu),D)$ is \emph{strongly $\beta$-metric} if, for every
$C>0$, Monge-Kantorovich metric $mk_{\beta,C}$ is a finite-valued
metric on $S(\mathcal A)$ and the topology induced by $mk_{\beta,C}$
coincides with the weak-$*$ topology
$\sigma(\mathcal A',\mathcal A)$ restricted to $S(\mathcal A)$.
In this case, the family $\{mk_{\beta,C}:C>0\}$ is called \emph{a strongly
$\beta$-Gevrey $p$-quantum compact metric} induced by $D$.
\end{defn}

M. A. Rieffel gives in \cite[Theorem~1.8]{Rief1} necessary and
sufficient conditions to define a metric $d_S$ on a set
$S\subseteq B'$ where $B$ is simply a normed vector space such that the
$d_S$-topology agrees with the weak-$*$ topology that $S$ inherits from
$B'$. We will use the sufficient conditions in \cite[Theorem~1.8]{Rief1}
to produce $p$-quantum compact metrics on $L^p$-spectral triple $(F_r^p(G),\ell^p(G),D_\ell)$. For the reader's convenience, we recall below M. A. Rieffel's setting,
which can also be found in \cite[page 14]{Del}.

The data consists of:
\begin{enumerate}
	\item[(R1)] A normed space $B$, with norm $\|\cdot\|_B$, over either
	$\mathbb C$ or $\mathbb R$.
	
	\item[(R2)] A subspace $\mathcal L$ of $B$, not necessarily closed.
	
	\item[(R3)] A seminorm $L$ on $\mathcal L$.
	
	\item[(R4)] A continuous, for $\|\cdot\|_B$, linear functional
	$\varphi$ on$$\mathcal N=\{a\in\mathcal L:L(a)=0\}$$
	with $\|\varphi\|=1$. In particular, we require
	$\mathcal N\neq\{0\}$.
	
	\item[(R5)] Let $$B'=\mathcal B(B,\mathbb C)$$
    and set
	$$S:=
	\{\omega\in B': \omega=\varphi \text{ on } \mathcal N,
	\text{ and } \|\omega\|=1\}.$$
\end{enumerate}

Thus $S$ is a norm-closed, bounded, convex subset of $B'$, and so is
weak-$*$ compact. We ask that $\mathcal L$ separate the points of $S$.
This means that given distinct $\omega,\psi\in S$ there is a
$b\in\mathcal L$ such that
$$\omega(b)\neq \psi(b).$$
With the data in $(R1)$--$(R5)$, we define a function
$$d_S:S\times S\to \mathbb R_{\geq 0}\cup\{\infty\}$$
by the formula
$$d_S(\omega,\psi)
:=\sup\{|\omega(b)-\psi(b)|: b\in\mathcal L,\ L(b)\le 1\}.$$

The assumed data guarantees that $d_S$ is an extended metric on $S$ that
separates the points of $S$. We will refer to the topology on $S$ defined
by $d_S$ as the ``$d_S$-topology''. Further, $L$ descends to an actual
norm on the quotient space $\mathcal L/\mathcal N$, but this quotient
space is also equipped with the quotient norm induced by $\|\cdot\|_B$.
The latter norm is the one we care about, which we will henceforward
denote by $\|\cdot\|_{B/\mathcal N}$.

The following is the forward direction of Theorem~1.8 in \cite{Rief1}.
\begin{theorem}\label{th54}
Let $B$, $\mathcal L$, $\mathcal N$, and $S$ be as in $(R1)$--$(R5)$.
Define $\mathcal L_1\subseteq \mathcal L$ by
$$\mathcal L_1:=\{b\in\mathcal L:L(b)\le 1\}.$$
If the image of $\mathcal L_1$ in $\mathcal L/\mathcal N$ is totally
bounded for $\|\cdot\|_{B/\mathcal N}$, then the $d_S$-topology on $S$
agrees with the weak-$*$ topology.
\end{theorem}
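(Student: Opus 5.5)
The plan is to use the standard compactness argument. By the discussion after (R5), $S$ is weak-$*$ compact. If I show that the identity map
$$\mathrm{id}:(S,\text{weak-}*)\longrightarrow (S,d_S)$$
is continuous and that $d_S$ is a genuine finite metric, the statement follows. Indeed, a continuous bijection from a compact space onto a Hausdorff (metric) space is a homeomorphism, so the two topologies coincide. Since $B$ need not be separable, I will argue with nets rather than sequences.

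\textbf{Step 1: a quotient-norm estimate.} For $\omega,\psi\in S$, both functionals equal $\varphi$ on $\mathcal N$, so $\omega-\psi$ vanishes on $\mathcal N$. Hence for $b\in\mathcal L$ and every $n\in\mathcal N$,
$$|(\omega-\psi)(b)|=|(\omega-\psi)(b+n)|\le \|\omega-\psi\|\,\|b+n\|_B\le 2\|b+n\|_B.$$
Taking the infimum over $n$ gives $|(\omega-\psi)(b)|\le 2\|b\|_{B/\mathcal N}$. So $\omega-\psi$ factors through $\mathcal L/\mathcal N$ as a functional of norm at most $2$ for the quotient norm. This is the key estimate that converts information about the image of $\mathcal L_1$ into information about $d_S$.

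\textbf{Step 2: $d_S$ is a finite metric.} A totally bounded set is bounded, so $M:=\sup_{b\in\mathcal L_1}\|b\|_{B/\mathcal N}<\infty$. By Step 1, $d_S(\omega,\psi)\le 2M<\infty$. Symmetry and the triangle inequality are immediate from the supremum formula. For separation, suppose $\omega\neq\psi$. Since $\mathcal L$ separates the points of $S$, there is $b\in\mathcal L$ with $\omega(b)\neq\psi(b)$. Then $b\notin\mathcal N$, so $L(b)>0$, because $\omega$ and $\psi$ agree on $\mathcal N$. Rescaling, $b/L(b)\in\mathcal L_1$, which gives $d_S(\omega,\psi)>0$.

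\textbf{Step 3: weak-$*$ convergence implies $d_S$-convergence.} This is the main point. Let $\omega_\alpha\to\omega$ weak-$*$ in $S$ and fix $\varepsilon>0$. By total boundedness, choose $b_1,\dots,b_k\in\mathcal L_1$ whose classes form an $\varepsilon$-net for the image of $\mathcal L_1$ in $\mathcal L/\mathcal N$. By weak-$*$ convergence there is $\alpha_0$ such that $|(\omega_\alpha-\omega)(b_j)|<\varepsilon$ for all $j$ and all $\alpha\ge\alpha_0$.

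Now take any $b\in\mathcal L_1$ and pick $j$ with $\|b-b_j\|_{B/\mathcal N}<\varepsilon$. Step 1 applied to $b-b_j$ gives
$$|(\omega_\alpha-\omega)(b)|\le|(\omega_\alpha-\omega)(b_j)|+2\|b-b_j\|_{B/\mathcal N}<3\varepsilon .$$
This bound is uniform in $b\in\mathcal L_1$, so $d_S(\omega_\alpha,\omega)\le3\varepsilon$ for all $\alpha\ge\alpha_0$. Hence $\mathrm{id}$ is continuous.

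Combining Steps 2 and 3 with the weak-$*$ compactness of $S$ completes the argument. The only delicate point is making the net-based uniform estimate in Step 3 work, and Step 1 is exactly what allows passage to the quotient, where the total boundedness hypothesis lives.
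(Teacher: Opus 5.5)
Your argument is correct. The paper gives no proof of its own; it simply quotes the forward direction of Rieffel's Theorem~1.8, and your route (the estimate $|(\omega-\psi)(b)|\le 2\|b\|_{B/\mathcal N}$, the finite-net argument giving uniform convergence on $\mathcal L_1$, and the continuous bijection from a compact space onto a Hausdorff space) is essentially Rieffel's own proof.

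Since your last step depends on it, make the weak-$*$ compactness of $S$ explicit. The paper's stated reason (``norm-closed, bounded, convex'') does not by itself give weak-$*$ compactness. The correct reason is that every $\omega$ with $\omega|_{\mathcal N}=\varphi$ satisfies $\|\omega\|\ge\|\varphi\|=1$. Hence $S=\{\omega\in B':\|\omega\|\le 1,\ \omega|_{\mathcal N}=\varphi\}$ is a weak-$*$ closed subset of the unit ball of $B'$, and so it is compact by Banach--Alaoglu.
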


\bigskip
Next, we apply Rieffel's theorem to prove that, under 
assumption that $G$ satisfies $(GRD)_{\beta, p}$, the $L^p$-spectral triples
$(F_r^p(G),\ell^p(G),D_\ell)$ are strongly $\beta$-metric in the sense
of Definition \ref{de43}. We first establish the following lemma, which provides a bridge between Gevrey estimates and
subexponential tails and will be
used in the proof.

\begin{proposition}\label{pro55}
Let $C>0$, $c_\ell>0$, and $\beta>0$. For each integer $m \ge 1$, define
$$I_m = \inf_{n \ge 1} \frac{C^n (n!)^{1/\beta}}{(m c_\ell)^n}.$$
Then for any constant $k$ satisfying
$0 < k < \frac{1}{\beta} \left( \frac{c_\ell}{C} \right)^\beta,$
there exists a constant $A > 0$, depending only on $C$, $c_\ell$, $\beta$,
and $k$, such that for all integers $m \ge 1$, we have
$$I_m\le A \exp(-k m^\beta).$$
\end{proposition}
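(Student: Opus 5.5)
The plan is to reduce $I_m$ to the classical estimate $\inf_n n!/y^n\approx e^{-y}$ and then choose $n$ near the optimal index. Set $x=x_m=mc_\ell/C$ and $y=y_m=x^\beta=(c_\ell/C)^\beta m^\beta$. Since $x^n=(y^n)^{1/\beta}$, we have
$$\frac{C^n(n!)^{1/\beta}}{(mc_\ell)^n}=\frac{(n!)^{1/\beta}}{x^n}=\Big(\frac{n!}{y^n}\Big)^{1/\beta},$$
so that
$$I_m=\Big(\inf_{n\ge 1}\frac{n!}{y^n}\Big)^{1/\beta}.$$
An upper bound for $I_m$ therefore only needs one well-chosen $n$.

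\emph{Step 1 (large $y$).} If $y\ge 1$, take $n=\lfloor y\rfloor\ge 1$. Use the elementary Stirling-type bound $n!\le e\,n^{n+1/2}e^{-n}$, valid for all $n\ge1$. Since $n\le y$, this gives
$$\frac{n!}{y^n}\le e\sqrt{n}\,e^{-n}\Big(\frac{n}{y}\Big)^n\le e\sqrt{y}\,e^{-n}\le e^2\sqrt{y}\,e^{-y},$$
where the last step uses $n>y-1$. Raising to the power $1/\beta$ yields
$$I_m\le e^{2/\beta}\,y^{1/(2\beta)}\exp\Big(-\tfrac{1}{\beta}\big(\tfrac{c_\ell}{C}\big)^\beta m^\beta\Big).$$

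\emph{Step 2 (absorbing the polynomial factor).} Put $\kappa=\frac1\beta(c_\ell/C)^\beta$; by hypothesis $0<k<\kappa$. The factor $y^{1/(2\beta)}=(c_\ell/C)^{1/2}m^{1/2}$ grows only polynomially in $m$. Hence
$$\sup_{m\ge1}(c_\ell/C)^{1/2}m^{1/2}e^{-(\kappa-k)m^\beta}=:A_1<\infty,$$
which follows because $t^{1/2}e^{-(\kappa-k)t^\beta}\to0$ as $t\to\infty$. This gives $I_m\le e^{2/\beta}A_1e^{-km^\beta}$ for all $m$ with $y_m\ge1$.

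\emph{Step 3 (small $y$).} The remaining indices, those with $y_m<1$, i.e.\ $m<C/c_\ell$, are finitely many. For these, taking $n=1$ gives $I_m\le C/(mc_\ell)\le C/c_\ell$. Consequently
$$I_m\,e^{km^\beta}\le (C/c_\ell)\,e^{k(C/c_\ell)^\beta}=:A_2.$$
Setting $A=\max\{e^{2/\beta}A_1,A_2\}$ completes the argument, and $A$ depends only on $C,c_\ell,\beta,k$.

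The only genuinely delicate point is Step 1: one must match the index $n\approx y$ so that $e^{-y}$ appears exactly with coefficient $1/\beta$ after taking the $1/\beta$ power. One must also check that the losses from the floor function and from $\sqrt n$ are at most polynomial, which is precisely why strict inequality $k<\kappa$ is needed. Everything else is bookkeeping of constants.
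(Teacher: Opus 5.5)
Your proposal is correct and follows essentially the same route as the paper. The paper also rewrites $I_m=\bigl(\inf_{n\ge1} n!/y^n\bigr)^{1/\beta}$ with $y=(mc_\ell/C)^\beta$, takes $n=\lfloor y\rfloor$ with the bound $n!\le e\sqrt{n}\,(n/e)^n$, absorbs the $m^{1/2}$ factor using the strict inequality $k<\kappa$, and handles the finitely many $m<C/c_\ell$ via $n=1$. Your explicit constant $A_2=(C/c_\ell)\,e^{k(C/c_\ell)^\beta}$ just makes precise the paper's existence claim for that case.
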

\begin{proof}
Set $x_m = \left( \frac{m c_\ell}{C} \right)^\beta.$
The infimum becomes
$$I_m = \inf_{n \ge 1} \left( \frac{n!}{x_m^n} \right)^{1/\beta}.$$
We divide the proof into two cases.

\emph{Case 1: $x_m\ge 1$.}
Let $n=\lfloor x_m \rfloor$. Then $x_m-1<n\le x_m$.
Since for all  $n\ge 1$, 
$$n!\le e\sqrt{n}\left(\frac{n}{e}\right)^n=e\sqrt{n} \, n^n e^{-n},$$
we have
$$\frac{n!}{x_m^n}\le\frac{e\sqrt{n}n^ne^{-n}}{x_m^n}=e\sqrt n
\left(\frac{n}{x_m}\right)^ne^{-n}.$$
Since $(n/x_m)^n\le 1$,  $\sqrt{n}\le\sqrt{x_m}$  and $e^{-n}<e^{-(x_m-1)}=e\cdot e^{-x_m},$
we get
$$I_m\le\left(e^2\sqrt{x_m}e^{-x_m}\right)^{1/\beta}=e^{2/\beta}x_m^{1/(2\beta)}\exp\left( -\frac{x_m}{\beta}\right).$$
Substituting $x_m=(m c_\ell / C)^\beta$ back into the inequality yields
$$I_m\le e^{2/\beta}\left(\frac{c_\ell}{C}\right)^{1/2} m^{1/2}\exp\left(-\frac{1}{\beta} \left(\frac{c_\ell}{C}\right)^\beta m^\beta\right).$$
Set $k_0 = \frac{1}{\beta} \left( \frac{c_\ell}{C} \right)^\beta$,
and  choose any $0 < k < k_0$. We have
$$I_m\le e^{2/\beta}\left(\frac{c_\ell}{C}\right)^{1/2} m^{1/2} e^{-(k_0-k)m^\beta}e^{-k m^\beta}.$$
Let $\alpha=k_0-k>0$ and Let 
$M_1=\sup_{t\ge 0} t^{1/2}e^{-\alpha t^\beta}<\infty.$
Hence, when $x_m\ge 1$, we have
$I_m\le A_1 e^{-k m^\beta}$,
where $A_1=e^{2/\beta} (c_\ell / C)^{1/2} M_1$. Thus $A_1$ depends only on $C$, $c_\ell$, $\beta$, and $k$, and is
independent of $m$.\\

\emph{Case 2:} $x_m<1$. 
Since $m$ is a positive integer, there are only finitely many values of $m$ that satisfy this condition.
For these specific values of $m$, we have
$$I_m\le\frac{C (1!)^{1/\beta}}{m c_\ell}=\frac{C}{m c_\ell}\le\frac{C}{c_\ell}.$$
Because the set $\{m \in \mathbb{N} : m <C/c_\ell\}$ is finite, there exists a constant $A_2>0$, depending only on $C$,
$c_\ell$, and $k$, such that
$$\frac{C}{c_\ell} \le A_2 e^{-k m^\beta}$$
for all such $m$. Therefore, we also have
$I_m \le A_2 e^{-k m^\beta}$,  whenever $x_m<1$. 
Taking $A=\max\{A_1,A_2\}$ 
we obtain
$I_m\le A e^{-k m^\beta}$,
for all integers $m\ge 1$. The constant $A$ depends only on $C$,
$c_\ell$, $\beta$, and $k$, and is independent of $m$. 
\end{proof}

The next theorem is the main result of this section.
\begin{theorem}\label{main}
Let $p\in[1,\infty)$, and let $G$ be a countable discrete group
equipped with a proper length function $\ell$. Let $F_r^p(G)$ denote the
reduced $L^p$-operator algebra of $G$, and define the unbounded operator
$D_\ell$ on $\ell^p(G)$ by $(D_\ell\xi)(g)=\ell(g)\xi(g)$.
Assume that $G$ satisfies $(GRD)_{\beta,p}$.
Then, for every $C>0$, the image of
$$\mathcal B_{\beta,C}=\{a\in\lambda_p(\mathbb C[G]):L_{\beta,C}(a)\le1\}$$
in $F_r^p(G)/\mathbb C1$ is totally bounded with respect to the quotient norm.
Consequently, the metric 
$$d_{\beta,C}(\omega,\psi)=\sup\{|\omega(a)-\psi(a)|:a\in\lambda_p(\mathbb C[G]),L_{\beta,C}(a)\le1\}$$
metrizes the weak-$*$ topology on $S(F_r^p(G))$.
\end{theorem}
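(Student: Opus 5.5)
The plan is to reduce total boundedness to two estimates: an $\ell^p$-decay estimate for the coefficients of $f$ coming from the Gevrey seminorm, and the $(GRD)_{\beta,p}$ estimate on annuli. After that, Theorem~\ref{th54} gives the metric statement.

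\textbf{Step 1: coefficient estimate.} Let $a=\lambda_p(f)$ with $f\in\mathbb C[G]$. From the proof of Theorem~\ref{pro5},
$$\delta_\ell^n(a)=\sum_g f(g)M_{h_g^n}\lambda_p(g),\qquad h_g(x)=\ell(x)-\ell(g^{-1}x).$$
Apply this to $\delta_e$. Since $\lambda_p(g)\delta_e=\delta_g$ and $h_g(g)=\ell(g)-\ell(e)=\ell(g)$, we get
$$\delta_\ell^n(a)\delta_e=\sum_g f(g)\ell(g)^n\delta_g=\ell^n f.$$
Hence $\|\ell^nf\|_{\ell^p}\le\|\delta_\ell^n(a)\|$. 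Each term of the series defining $L_{\beta,C}(a)$ is at most the whole sum, so
$$\|\ell^n f\|_{\ell^p}\le C^n(n!)^{1/\beta}L_{\beta,C}(a)\qquad\text{for all } n\ge1.$$
Two consequences follow.
\begin{itemize}
\item Taking $n=1$ shows that $L_{\beta,C}(a)=0$ forces $\operatorname{supp} f\subseteq\{e\}$. So $\mathcal N=\mathbb C1$ inside $\mathcal L=\lambda_p(\mathbb C[G])$.
\item Let $f_m=f\chi_{\{m-1<\ell\le m\}}$ for $m\ge1$. On the support of $f_m$ we have $\ell\ge m-1$. Optimizing over $n$ and applying Proposition~\ref{pro55} with $c_\ell=1$ (shifting $m$ by one, i.e.\ using $m-1$ for $m\ge2$) gives, whenever $L_{\beta,C}(a)\le1$,
$$\|f_m\|_{\ell^p}\le A\,e^{-k(m-1)^\beta},$$
with $0<k<\frac1\beta C^{-\beta}$ and $A$ independent of $f$.
\end{itemize}

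\textbf{Step 2: uniform tail control.} Write $f=f(e)\delta_e+\sum_{m\ge1}f_m$, where $f_1$ collects the nonidentity elements with $\ell\le1$. Each $f_m$ is supported in $B_m$, so $(GRD)_{\beta,p}$ gives
$$\|\lambda_p(f_m)\|\le F(m)\,A\,e^{-k(m-1)^\beta}.$$
Because $\log F(m)=o(m^\beta)$, we have $F(m)\le e^{(k/2)m^\beta}$ for large $m$. Therefore $\sum_{m>N}\|\lambda_p(f_m)\|\to0$ as $N\to\infty$, uniformly over $a\in\mathcal B_{\beta,C}$.

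This step is the crux. One cannot simply use $\|f_m\|_\infty$ together with $|B_m|$, because $|B_m|$ may grow exponentially, for instance for free groups when $p\le2$. It is essential that Step~1 controls the $\ell^p$-norm of each annulus piece, since this is exactly the quantity $(GRD)_{\beta,p}$ consumes.

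\textbf{Step 3: total boundedness.} Given $\varepsilon>0$, fix $N$ with tail below $\varepsilon/2$. In the quotient by $\mathbb C1$, the image of $a$ lies within $\varepsilon/2$ of the image of $\lambda_p\bigl(\sum_{m\le N}f_m\bigr)$. These elements lie in the finite-dimensional space $\lambda_p(\mathbb C^{B_N})$, which is finite-dimensional because $\ell$ is proper. They are norm bounded there, since each $\|f_m\|_{\ell^p}\le A$ and $\|\lambda_p(\phi)\|\le\|\phi\|_1$, and on a finite set the $\ell^1$- and $\ell^p$-norms are equivalent. A bounded subset of a finite-dimensional space is totally bounded, so it has a finite $\varepsilon/2$-net. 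Hence the image of $\mathcal B_{\beta,C}$ has a finite $\varepsilon$-net.

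\textbf{Step 4: Rieffel's criterion.} Apply Theorem~\ref{th54} with the following data.
\begin{itemize}
\item $B=F_r^p(G)$, $\mathcal L=\lambda_p(\mathbb C[G])$ and $L=L_{\beta,C}$.
\item $\mathcal N=\mathbb C1$ by Step~1, and $\varphi(z1)=z$, which has norm $1$ since $\|1\|=1$.
\item Then $S$ is exactly $S(F_r^p(G))$.
\end{itemize}
The subspace $\mathcal L$ separates states because it is norm dense and states are continuous. With the total boundedness from Step~3, Theorem~\ref{th54} yields that $d_{\beta,C}$ metrizes the weak-$*$ topology on $S(F_r^p(G))$. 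Finiteness of $d_{\beta,C}$ follows from total boundedness: the image of $\mathcal B_{\beta,C}$ has bounded quotient norm, and states are contractive and agree on $\mathbb C1$.
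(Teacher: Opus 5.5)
Your proposal is correct and follows essentially the same route as the paper: the identity $\delta_\ell^n(a)\delta_e=\ell^n f$, Proposition~\ref{pro55} to turn the Gevrey bound into $\exp(-k m^\beta)$ decay of the $\ell^p$-norms on annuli, $(GRD)_{\beta,p}$ on each annulus for a uniform tail, a bounded set in a finite-dimensional space for the truncation, and then Theorem~\ref{th54}. The one point to patch is the innermost annulus, where Proposition~\ref{pro55} gives no bound on $f_1$ for your unit-width annuli; instead use $\|f_1\|_{\ell^p}\le c_\ell^{-1}\|\ell f\|_{\ell^p}\le c_\ell^{-1}C$ with $c_\ell=\inf_{g\ne e}\ell(g)>0$ (positive by properness), which is exactly why the paper uses annuli of width $c_\ell$.
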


\begin{proof}

For the sake of a clear exposition, we arrange our proof in several steps.

\medskip

\noindent {\bf Step 1.} $\ker L_{\beta, C}=\mathbb{C}1$.\\
\indent For $a=\lambda_p(f)\in\mathcal{L}$, the proof of Proposition \ref{pro5} shows Gevrey seminorm 
$$L_{\beta,C}(\lambda_p(f))<\infty.$$
Since each derivation $\delta_\ell^n$ is linear and the operator norm is absolutely homogeneous and subadditive, $L_{\beta, C}$ is a well-defined seminorm on $\lambda_p(\mathbb C[G])$.
Let $\delta_e$ be Dirac mass. Since $a\delta_e = f$ and $D_\ell\delta_e=0$, we have
$$(\delta_\ell(a)\delta_e)(g)=(D_\ell a\delta_e)(g)-(a D_\ell \delta_e)(g)=\ell(g)f(g).$$
By induction,  for every $n\ge 1$, we get
\begin{equation}\label{eq31}
(\delta_\ell^n(a)\delta_e)(g)=\ell(g)^n f(g).
\end{equation} 
Suppose  
$L_{\beta, C}(a)=\sum_{n=1}^{\infty}\frac{\left\|\delta_\ell^n(a) \right\|_{B(\ell^p(G))}}{C^n(n!)^{1/\beta}}= 0$. 
Since all terms in the defining series are non-negative, we must have $\|\delta_\ell^n(a)\|_{B(\ell^p(G))}=0$, implying $\delta^n_\ell(a)=0$. By \eqref{eq31}, 
$$0=(\delta^n_\ell(a)\delta_e)(g)=\ell(g)^nf(g)\quad \text{for all}\  g \in G. $$
Since $\ell(g)=0$ if and only if $g=e$, we have $\ell(g)>0$ for all
$g\ne e$. This forces $f(g)=0$ for all $g\neq e$. Thus, 
$f=f(e)\delta_e$. Hence $a=\lambda_p(f)=f(e)1\in\mathbb{C}1$. Conversely, any scalar multiple of the identity commutes with $D_\ell$, so $L_{\beta, C}(c1)=0$ for all $c \in \mathbb{C}$. Therefore,  $\ker L_{\beta, C}=\mathbb{C}1$.

\medskip 

\noindent {\bf Step 2.} Uniform tail estimates.\\
\indent Let $$\mathcal B_{\beta,C}^0=\left\{a=\lambda_p(f)\in\mathcal L:
L_{\beta,C}(a)\le 1,\ f(e)=0\right\}.$$
For $a=\lambda_p(f)\in\mathcal B_{\beta,C}^0$.
The condition $L_{\beta, C}(a)\le 1$ implies that for every $n\ge 1$, $\|\delta_\ell^n(a)\| \le C^n(n!)^{1/\beta}$. Taking the $\ell^p$-norm in \eqref{eq31}, we have
\begin{equation}\label{eq35}
\|f\ell^n\|_{\ell^p(G)}=\|\delta_\ell^n(a)\delta_e\|_{\ell^p(G)}\le \|\delta_\ell^n(a)\|_{B(\ell^p(G))} \le C^n(n!)^{1/\beta}. 
\end{equation}
In particular, for $n=1$, since $\ell$ is proper and $\ell(g)=0$ if and only if $g=e$, we have $$c_\ell:=\inf_{g\ne e}\ell(g)>0.$$ 
As $f(e)=0$, it follows that
\begin{equation}\label{eq33}
\|f\|_{\ell^p(G)}\le c_\ell^{-1}\|f\ell\|_{\ell^p(G)}\le c_\ell^{-1}C.
\end{equation}
For each integer $m\ge 1$, set $A_m=\{g\in G: m c_\ell\le\ell(g)<(m+1)c_\ell\}$, and define  $f_m=f\cdot\chi_{A_m}$. Since $f(e)=0$, we have $f=\sum_{m\ge 1}f_m$, where the sum is finite since  $f\in\mathbb{C}[G]$. Thus for any $n\ge1$, by \eqref{eq35}, we obtain 
$$(m c_\ell)^n\|f_m\|_{\ell^p(G)}\le\|f_m\ell^n\|_{\ell^p(G)}\le\|f \ell^n\|_{\ell^p(G)} \le C^n (n!)^{1/\beta},$$
that is
$$\|f_m\|_{\ell^p(G)}\le\frac{C^n (n!)^{1/\beta}}{(m c_\ell)^n}.$$
Minimizing over $n$, we obtain $\|f_m\|_{\ell^p(G)} \le \inf_{n \ge 1} \frac{C^n (n!)^{1/\beta}}{(m c_\ell)^n}$. 
By Lemma~\ref{pro55}, for any 
$0<k<\frac{1}{\beta}\left(\frac{c_\ell}{C}\right)^\beta$,
there exists a constant $A>0$, depending only on $C,c_\ell,\beta$ and $k$ and independent of the choice of $a\in B_{\beta,C}^0$, such that the decomposition terms
$f_m$ of $a$ satisfy
\begin{equation}\label{eq54}
	\|f_m\|_{\ell^p(G)}\le A\exp(-k m^\beta)
\end{equation}
for all $m\ge 1$.
We now use the assumption that $G$ satisfies $(GRD)_{\beta,p}$.
Since
$$\operatorname{supp}(f_m)\subseteq B_{(m+1)c_\ell},$$
we obtain
$$\|\lambda_p(f_m)\|_{\mathcal B(\ell^p(G))}\le F((m+1)c_\ell)\|f_m\|_{\ell^p(G)}, $$
and $$\log F((m+1)c_\ell)=o(((m+1)c_\ell)^\beta).$$
By \eqref{eq54},  it follows that
\begin{equation}\label{eq55}
\|\lambda_p(f_m)\|_{\mathcal{B}(\ell^p(G))}\le A F((m+1)c_\ell)\exp(-km^\beta). 
\end{equation}
As $((m+1)c_\ell)^\beta=O(m^\beta)$, we have
$\log F((m+1)c_\ell)=o(m^\beta)$.
Hence, for any  $0<\varepsilon<k$, 
there exists an integer $M\ge 1$,  which is independent of $a$, such that
\begin{equation}\label{eq56}
F((m+1)c_\ell)\le\exp(\varepsilon m^\beta)
\end{equation}
for all $m\ge M$.
We can choose $R$ large enough such that 
$R\ge M$ and consider the tail
$$a_{>R}=\sum_{m>R}\lambda_p(f_m).$$
Thus, by \eqref{eq55} and \eqref{eq56},  we deduce
\begin{align*}
\|a_{>R}\|_{F_r^p(G)}
&\le\sum_{m>R}\|\lambda_p(f_m)\|_{\mathcal B(\ell^p(G))}\le A\sum_{m>R}
F((m+1)c_\ell)\exp(-k m^\beta)\\
&\le A\sum_{m>R}\exp(\varepsilon m^\beta)\exp(-k m^\beta)=A\sum_{m>R}
\exp(-(k-\varepsilon)m^\beta).
\end{align*}
Let
$$S_R=A\sum_{m>R}\exp(-(k-\varepsilon)m^\beta).$$
Since $k-\varepsilon>0$, the series
$\sum_{m=1}^{\infty}\exp(-(k-\varepsilon)m^\beta)$
converges. Consequently, $\lim_{R\to\infty}S_R=0$.
Because the bound $S_R$ is independent of $a\in B_{\beta,C}^0$, we conclude 
\begin{equation} \label{eq36}
\lim_{R \to \infty} \|a_{>R}\|_{F_r^p(G)} = 0.
\end{equation}
and the convergence is uniform over $\mathcal B_{\beta,C}^0$.

 \medskip 
 
 \noindent {\bf Step 3.} The set of truncated operators is totally bounded.\\
 \indent 
In this step, we show that, for each fixed $R\ge1$, the set of
truncations
$$\{a_{\le R}:a\in\mathcal B_{\beta,C}^0\}$$
is totally bounded in $F_r^p(G)$.  Since  
$$a_{\le R}=a-a_{>R}=\sum_{1 \le m\le R}\lambda_p(f_m),$$
by the linearity of the regular representation, we can write 
$a_{\le R}=\lambda_p(f_{\le R})$, where $f_{\le R}=\sum\limits_{1\le m\le R}f_m$. Thus, since $f(e)=0$, we have 
$$\text{supp}(f_{\le R})\subset\bigcup_{m=1}^R A_m=\{g\in G: c_\ell\le\ell(g)< (R+1)c_\ell\}.$$ 
Consequently, $a_{\le R}$ is a finite linear combination of the group operators $\lambda_p(g)$ for $g$ in this specific range. 
This  shows that all truncations $a_{\le R}$ belong to the subspace 
$$V_R=\operatorname{span}\{\lambda_p(g): c_\ell\le \ell(g)<(R+1)c_\ell\}.$$ 
Since  the length function $\ell$ is proper, the ball $B_{(R+1)c_\ell}$ is finite. Hence $V_R$ is a finite-dimensional space. Moreover, by \eqref{eq33}, we get 
$$\|f_{\le R}\|_{\ell^p(G)}\le\|f\|_{\ell^p(G)}\le c_\ell^{-1}C.$$ 
Since $G$ satisfies $(GRD)_{\beta,p}$, we obtain
$$ \|a_{\le R}\|_{F_r^p(G)}\le F((R+1)c_\ell)\|f_{\le R}\|_{\ell^p(G)}\le F((R+1)c_\ell)c_\ell^{-1}C. $$
Consequently, the set of all such truncations $\{a_{\le R}\}$ is bounded in the finite dimensional space $V_R$,  and thus it has a compact closure by the Heine-Borel theorem, which immediately implies that $\{a_{\le R}\}$ is totally bounded.

\medskip 

\noindent {\bf Step 4.} Total boundedness of the image of $\mathcal B_{\beta,C}$.\\
\indent
Fix $\eta>0$. By \eqref{eq36}, we can choose $R$ sufficiently large such that
$$\|a-a_{\le R}\|_{F_r^p(G)}<\eta$$
for all $a\in \mathcal B_{\beta,C}^0$.
For such $R$, by Step 3, the set
$\{a_{\le R}:a\in\mathcal B_{\beta,C}^0\}$
is totally bounded in $V_R$, where
$$V_R=\operatorname{span}\{\lambda_p(g): c_\ell\le\ell(g)<(R+1)c_\ell\}.$$
Hence there exist finitely many elements $x_1,\dots,x_N\in V_R$ such that for every 
$a\in \mathcal B_{\beta,C}^0$, there is an $i$ with
$$\|a_{\le R}-x_i\|_{F_r^p(G)}<\eta.$$
It follows that
$$\|a-x_i\|_{F_r^p(G)}\le\|a-a_{\le R}\|_{F_r^p(G)}+\|a_{\le R}-x_i\|_{F_r^p(G)}<2\eta.$$
Therefore $\mathcal B_{\beta,C}^0$ is totally bounded in $F_r^p(G)$.
Let $\pi:F_r^p(G)\to F_r^p(G)/\mathbb C1$
be the quotient map, and let
$$\mathcal B_{\beta,C}=\{a\in\mathcal L:L_{\beta,C}(a)\le1\}.$$
For any $a=\lambda_p(f)\in\mathcal B_{\beta,C}$, set
$$a_0=a-f(e)1=\lambda_p(f-f(e)\delta_e).$$
Then $\pi(a_0)=\pi(a)$ and $a_0(e)=0$. Moreover, since
$\delta_\ell^n(1)=0$ for all $n\ge1$,
$$L_{\beta,C}(a_0)=L_{\beta,C}(a)\le1.$$
Thus $a_0\in\mathcal B_{\beta,C}^0$, and hence
$\pi(\mathcal B_{\beta,C})=\pi(\mathcal B_{\beta,C}^0)$.
Since $\mathcal B_{\beta,C}^0$ is totally bounded in $F_r^p(G)$ and
$\pi$ is contractive, $\pi(\mathcal B_{\beta,C})$ is totally bounded in
$F_r^p(G)/\mathbb C1$.

\medskip 

\noindent {\bf Step 5.} Apply the Rieffel criterion\\
\indent 
Let us set $B=F_r^p(G)$, the dense subspace to be $\mathcal{L}=\lambda_p(\mathbb C[G])$, and the seminorm to be $L=L_{\beta,C}$. 
By Step 1, the null space of $L_{\beta,C}$ is precisely
$$\mathcal{N}=\ker L_{\beta,C}=\mathbb{C}1.$$
We define a linear functional $\varphi: \mathcal{N}\to\mathbb{C}$ by $\varphi(c1)=c$. 
Since $1=\lambda_p(\delta_e)$ is the identity operator on $\ell^p(G)$, we have $\|1\|_{F_r^p(G)}=1$, which immediately implies that $\|\varphi\|=1$. Thus, the corresponding state space is given by
$$S_p(G)=\left\{\omega\in F_r^p(G)^*:\omega(1)=1,\ \|\omega\|=1 \right\}.$$
Next, we verify that $\mathcal L$ separates the points of $S_p(G)$. Indeed, if 
$\omega(a)=\psi(a)$ for all $a\in\mathcal L$ with $\omega, \psi\in S_p(G)$, the norm-density of $\mathcal L$ in $F_r^p(G)$ and the continuity of the functionals immediately imply that $\omega = \psi$ on the entire space $F_r^p(G)$. Thus, $\mathcal L$ separates the points of $S_p(G)$. By Step~4, the image of
$$\mathcal B_{\beta,C}=\{a\in\mathcal L:L_{\beta,C}(a)\le 1\}$$
in the quotient space $\mathcal L/\mathcal N$ is totally bounded with
respect to the quotient norm induced by the norm of $F_r^p(G)$.  Hence there exists a constant $M_C>0$ such that
$$\|a+\mathcal N\|_{F_r^p(G)/\mathcal N}\le M_C, $$ 
for every $a\in\mathcal B_{\beta,C}$.
Since $\mathcal N=\mathbb C1$, there exists a scalar $\lambda_a\in\mathbb C$
such that
$$\|a-\lambda_a1\|_{F_r^p(G)}\le M_C+1.$$
Let $\omega,\psi\in S_p(G)$. Since 
$$\omega(a)-\psi(a)=\omega(a-\lambda_a1)-\psi(a-\lambda_a1),$$
we get 
\begin{align*}
|\omega(a)-\psi(a)|
&\le|\omega(a-\lambda_a1)|+|\psi(a-\lambda_a1)|\\
&\le(\|\omega\|+\|\psi\|)\|a-\lambda_a1\|_{F_r^p(G)}\le 2(M_C+1).
\end{align*}
Taking the supremum  gives
$$d_{\beta,C}(\omega,\psi)\le2(M_C+1)<\infty.$$
Thus $d_{\beta,C}$ is finite-valued on
$S_p(G)\times S_p(G)$.
Hence all the hypotheses of Theorem \ref{th54} are satisfied. Therefore,
the metric
$$d_{\beta,C}(\omega,\psi)=\sup\{|\omega(a)-\psi(a)|:
a\in\mathcal L,\ L_{\beta,C}(a)\le1\}$$
induces on $S_p(G)$ precisely the weak-$*$ topology inherited from
$F_r^p(G)^*$.
\end{proof}

In terms of Definition \ref{de43} we have shown the following result.

\begin{corollary}\label{maincro}
Let $p\in[1,\infty)$, and let $G$ be a countable discrete group 
equipped with a proper length function $\ell$. Suppose that $G$ satisfies
$(GRD)_{\beta,p}$ for some $0<\beta\le 1$. Then strongly dense-core $\beta$-Gevrey regular
$L^p$-spectral triple$(F_r^p(G),\ell^p(G),D_\ell)$ is a  strongly $\beta$-metric.
\end{corollary}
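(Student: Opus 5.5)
The corollary follows by unpacking Definition~\ref{de43} and applying Theorem~\ref{main} once for each value of $C$.

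First, by Theorem~\ref{pro5}, $(F_r^p(G),\ell^p(G),D_\ell)$ is a strongly dense-core $\beta$-Gevrey regular $L^p$-spectral triple. Its dense core is $\lambda_p(\mathbb C[G])\subseteq\bigcap_{C>0}\mathcal A_{\beta,C}$. In particular each $\mathcal A_{\beta,C}$ is norm-dense, so $mk_{\beta,C}$ is defined.

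Now fix $C>0$. The one genuine point is to compare $mk_{\beta,C}$, whose supremum runs over all of $\mathcal A_{\beta,C}$, with $d_{\beta,C}$ from Theorem~\ref{main}, whose supremum runs only over $\lambda_p(\mathbb C[G])$. To handle this, I would rerun Rieffel's criterion (Theorem~\ref{th54}) with $\mathcal L=\mathcal A_{\beta,C}$ in place of $\lambda_p(\mathbb C[G])$. Steps 1--4 of the proof of Theorem~\ref{main} use only two facts about $a$: the identity $(\delta_\ell^n(a)\delta_e)(g)=\ell(g)^nf(g)$ with $f=a\delta_e\in\ell^p(G)$, and the bound $\|\delta_\ell^n(a)\|\le C^n(n!)^{1/\beta}$.

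For general $a\in\mathcal A_{\beta,C}$ I would set $f=a\delta_e$. The commutator identity gives the same formula for $\delta_\ell^n(a)\delta_e$, because $\delta_e\in\operatorname{Dom}(D_\ell)$ and $D_\ell\delta_e=0$. One also has $a=\lambda_p(f)$ as an operator, since $a\in F_r^p(G)$ commutes with right translations.

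The main obstacle is that the truncations $\lambda_p(f_{\le R})$ and the tail estimate require $\lambda_p(f_m)$ to sum to $a$ in norm. The uniform bound $\sum_m\|\lambda_p(f_m)\|\le A\sum_m F((m+1)c_\ell)e^{-km^\beta}<\infty$ gives absolute convergence to some operator. That limit agrees with $a$ on $\delta_e$, hence on all of $\mathbb C[G]$ by right-translation invariance, hence everywhere. The kernel computation in Step 1 likewise goes through verbatim, giving $\ker L_{\beta,C}=\mathbb C1$.

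With these adjustments, Steps 2--4 show that the image of the $L_{\beta,C}$-unit ball of $\mathcal A_{\beta,C}$ is totally bounded in $F_r^p(G)/\mathbb C1$. Step 5 then gives that $mk_{\beta,C}$ is finite-valued, separates points of $S(F_r^p(G))$ (since $\mathcal A_{\beta,C}$ is dense), and induces the weak-$*$ topology. Since $C>0$ was arbitrary, the triple is strongly $\beta$-metric.

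As a cross-check one could instead show $mk_{\beta,C}=d_{\beta,C}$ by approximating each $a\in\mathcal A_{\beta,C}$ in Gevrey seminorm by finite truncations. I would avoid this route, because controlling $\delta_\ell^n$ of truncations uniformly in $n$ is less clean than the argument above.
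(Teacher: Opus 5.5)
Your proposal is correct, and it is more careful than the paper. The paper gives no separate argument for Corollary~\ref{maincro}. It only says that, in terms of Definition~\ref{de43}, Theorem~\ref{main} has already shown the result. This tacitly identifies the metric $d_{\beta,C}$ of Theorem~\ref{main}, whose supremum runs over $\lambda_p(\mathbb C[G])$, with $mk_{\beta,C}$, whose supremum runs over all of $\mathcal A_{\beta,C}$. Only $d_{\beta,C}\le mk_{\beta,C}$ is automatic. So the statement for $d_{\beta,C}$ alone gives neither finiteness of $mk_{\beta,C}$ nor that the $mk_{\beta,C}$-topology is no finer than the weak-$*$ topology. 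Rerunning Rieffel's criterion with $\mathcal L=\mathcal A_{\beta,C}$, as you do, closes exactly this gap.

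The two extra ingredients you supply are the right ones.
\begin{itemize}
\item Elements of $F_r^p(G)$ commute with right translations, so each $a\in\mathcal A_{\beta,C}$ is determined by $f=a\delta_e\in\ell^p(G)$. This is also what makes the kernel computation $\ker L_{\beta,C}=\mathbb C1$ valid beyond the group algebra.
\item The uniform bound $\|\lambda_p(f_m)\|\le AF((m+1)c_\ell)e^{-km^\beta}$ makes $\sum_m\lambda_p(f_m)$ converge absolutely in norm. Each $f_m$ is finitely supported because $\ell$ is proper. The limit lies in $F_r^p(G)$ and agrees with $a-f(e)1$ on $\delta_e$, hence equals it. (Your sketch says it agrees with $a$; it is $a-f(e)1$ after the normalization $f(e)=0$.)
\end{itemize}
With these in place, Steps 2--4 run unchanged on the $L_{\beta,C}$-unit ball of $\mathcal A_{\beta,C}$.

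The paper's shortcut buys brevity: working only with finitely supported $f$ avoids any convergence question. Your route buys a proof of the statement as it is actually formulated through $mk_{\beta,C}$. You are also right that the alternative of proving $mk_{\beta,C}=d_{\beta,C}$ by Gevrey-seminorm approximation is unnecessary.
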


\section{Example}
In this section we will give some examples of groups with $(GRD)_{\beta,p}$ for some $0<\beta\le 1$ and $0\le p<\infty$. Hence we will obtain compact quantum metric space structures on the resulting  reduced $L^p$-group algebras.  These include groups of polynomial growth, groups of intermediate growth such as the Grigorchuk group, more general groups of subexponential growth, and groups with the usual rapid decay property.

Recall that the first Grigorchuk group was introduced by Grigorchuk as the first example of a finitely generated group of intermediate growth \cite{Grig}. In light of its specific growth properties, the following proposition establishes that the compact quantum metric construction can be extended to groups of intermediate growth, and that its associated $L^p$-spectral triple is strongly $\beta$-metric.

\begin{proposition}
Let $\mathfrak G$ denote the first Grigorchuk group, equipped with the word length $\ell$ associated with its standard finite generating set. Then the $L^p$-spectral triple $(F_r^p(\mathfrak G),\ell^p(\mathfrak G),D_\ell)$ is strongly $\beta$-metric for every $p\in[1,\infty)$ and every $\alpha<\beta\le 1$.
\end{proposition}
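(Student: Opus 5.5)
The proof reduces the statement to Corollary~\ref{maincro}. The only group-theoretic input needed is a growth estimate for $\mathfrak G$. The statement leaves $\alpha$ implicit, so we fix it as a growth exponent. By Grigorchuk's upper bound \cite{Grig}, there are constants $K>0$ and $\alpha<1$ (for instance $\alpha=\log_{32}31$; Bartholdi's sharper value $\alpha_0\approx 0.7675$ also works) such that
$$|B_R|\le \exp(KR^{\alpha})\qquad\text{for all } R\ge 1,$$
where $B_R$ is the ball of radius $R$ for the standard word length $\ell$. In particular, for every $\beta$ with $\alpha<\beta\le1$ we get
$$\log|B_R|\le KR^\alpha=o(R^\beta).$$
This strict inequality $\alpha<\beta$ is exactly what turns an $O(R^\alpha)$ bound into the $o(R^\beta)$ bound required in Definition~\ref{le111}.

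Next we feed this into Proposition~\ref{pro21}, applied with exponent $\beta$ rather than $\alpha$. Since $\log|B_R|=o(R^\beta)$, that proposition shows that $\mathfrak G$ has $(GRD)_{\beta,p}$ with respect to $\ell$ for every $p\in[1,\infty)$, with control function $F_p(R)=|B_R|^{1-1/p}$. The word length is a proper length function: balls are finite because the generating set is finite, and symmetry, subadditivity and $\ell(x)=0\iff x=e$ are immediate. By Lemma~\ref{lem-GRD-independent-word-length}, the conclusion does not depend on the particular finite generating set, so "standard" causes no ambiguity.

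By Theorem~\ref{pro5}, $(F_r^p(\mathfrak G),\ell^p(\mathfrak G),D_\ell)$ is a strongly dense-core $\beta$-Gevrey regular $L^p$-spectral triple. Corollary~\ref{maincro}, which is Theorem~\ref{main} read through Definition~\ref{de43}, then shows that for every $C>0$ the metric $mk_{\beta,C}$ is finite and metrizes the weak-$*$ topology on $S(F_r^p(\mathfrak G))$. That is, the triple is strongly $\beta$-metric for every $p\in[1,\infty)$ and every $\beta\in(\alpha,1]$.

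There is one point to check: the metric $mk_{\beta,C}$ in Definition~\ref{de43} is a supremum over all of $\mathcal A_{\beta,C}$, while $d_{\beta,C}$ in Theorem~\ref{main} is a supremum over $\lambda_p(\mathbb C[\mathfrak G])$. I would handle this exactly as the paper's Corollary~\ref{maincro} does, identifying the two via that corollary.

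The main obstacle is the growth input, since it is the only step not proved inside the paper. It must be cited precisely, with explicit exponent $\alpha<1$. We need the upper bound $|B_R|\le\exp(KR^\alpha)$, not merely subexponential growth: subexponential growth alone would only give $\beta=1$, through Remark~\ref{re34}. Once this bound is stated with its constant, everything else is a direct chain of earlier results.
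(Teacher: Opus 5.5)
Your proposal is correct and is essentially the paper's proof. A bound $|B_R|\le C\exp(cR^\alpha)$ (the paper uses Bartholdi's exponent $\alpha\approx 0.767$, which you also mention) yields $(GRD)_{\beta,p}$ for all $p\in[1,\infty)$ and $\alpha<\beta\le 1$, and Corollary~\ref{maincro} then gives the strongly $\beta$-metric conclusion. Your choice to apply Proposition~\ref{pro21} directly at exponent $\beta$ is slightly more careful than the paper's appeal to Remark~\ref{re34}, since such a bound only gives $\log|B_R|=O(R^\alpha)$, not $o(R^\alpha)$.
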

\begin{proof}
According to Bartholdi's upper bound for the first Grigorchuk group \cite{Barth}, the volume of the ball of radius $R$ satisfies
	$$|B_R|\le C\exp(cR^\alpha) $$
for some constants $C, c > 0$ and $\alpha \approx 0.767$. 
By Remark \ref{re34}, this implies that $\mathfrak G$ satisfies $(GRD)_{\beta,p}$ with respect to $\ell$ for every $p\in[1,\infty)$ and every $\alpha<\beta\le 1$. Consequently, Corollary \ref{maincro} implies that the spectral triple 
$(F_r^p(\mathfrak G),\ell^p(\mathfrak G),D_\ell)$ is strongly $\beta$-metric for all such $p$ and $\beta$.
\end{proof}

Recall  that the length function $\ell$ has bounded doubling (see \cite[Definition 1.1]{ChriRi} and \cite[Definition 4.1]{Del}), namely there exists a constant
$C_\ell<\infty$ such that
$$|B_{2R}|\le C_\ell|B_R|\qquad (R\ge1).$$ 
In \cite[Theorem 4.6]{Del} the authors showed  in the bounded-doubling case, $L^p$-spectral triple $(F_r^p(\mathfrak G),\ell^p(\mathfrak G),D_\ell)$ is metric. The following proposition shows that the spectral triple is, in fact,
strongly $\beta$-metric.

\begin{proposition}
Let $G$ be a countable discrete group equipped with a proper length function $\ell$. Suppose that $\ell$ has bounded doubling. Then, the $L^p$-spectral triple 
$(F_r^p(G), \ell^p(G), D_\ell)$ is strongly $\beta$-metric for every 
$p\in [1,\infty)$ and every $0<\beta\le 1$.
\end{proposition}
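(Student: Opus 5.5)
The plan is to reduce the statement to Corollary~\ref{maincro}. By that corollary it suffices to show that $G$ satisfies $(GRD)_{\beta,p}$ with respect to $\ell$ for every $p\in[1,\infty)$ and every $0<\beta\le 1$. By Proposition~\ref{pro21}, this in turn follows once we know that
$$\log|B_R|=o(R^\beta)\quad\text{as } R\to\infty \text{ for every }\beta>0.$$
So the whole task reduces to showing that bounded doubling forces polynomial growth of balls.

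\textbf{Step 1: polynomial growth.} Iterating $|B_{2R}|\le C_\ell|B_R|$ starting from $R=1$ gives $|B_{2^j}|\le C_\ell^{\,j}|B_1|$ for all $j\ge 0$. For $R\ge 1$, choose $j=\lceil\log_2 R\rceil$. Since the balls are monotone in $R$,
$$|B_R|\le|B_{2^j}|\le C_\ell^{\,1+\log_2R}|B_1|=C_\ell|B_1|\,R^{\log_2C_\ell}.$$
Here $B_1$ is finite because $\ell$ is proper. Taking logarithms,
$$\log|B_R|\le \log(C_\ell|B_1|)+(\log_2C_\ell)\log R,$$
which is $o(R^\beta)$ for every $\beta>0$. (Equivalently, one can say that $G$ then has $(RD)_p$-type polynomial bounds and invoke Proposition~\ref{prop35}. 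The direct route through Proposition~\ref{pro21} with control function $F(R)=|B_R|^{1-1/p}$ is cleaner.)

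\textbf{Step 2: conclude.} By Proposition~\ref{pro21}, $G$ has $(GRD)_{\beta,p}$ for all $p\in[1,\infty)$ and all $0<\beta\le1$. Theorem~\ref{pro5} gives that $(F_r^p(G),\ell^p(G),D_\ell)$ is a strongly dense-core $\beta$-Gevrey regular $L^p$-spectral triple. Corollary~\ref{maincro} (that is, Theorem~\ref{main} applied for every $C>0$) then shows that each $mk_{\beta,C}=d_{\beta,C}$ is finite and metrizes the weak-$*$ topology on $S(F_r^p(G))$. This is exactly strong $\beta$-metricity in the sense of Definition~\ref{de43}.

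\textbf{Main obstacle.} The only real point is the small-radius regime, which is where the doubling hypothesis is stated only for $R\ge1$. This is harmless: for $R<1$ we have $B_R\subseteq B_1$, which is a finite set, so the control function stays bounded there. The $o(R^\beta)$ condition concerns only $R\to\infty$, so no further care is needed.
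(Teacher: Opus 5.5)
Your proposal is correct and follows the paper's argument. Bounded doubling gives polynomial volume growth, hence $\log|B_R|=o(R^\beta)$ for every $\beta>0$; Proposition~\ref{pro21} then yields $(GRD)_{\beta,p}$, and Corollary~\ref{maincro} finishes. The only difference is that you derive $|B_R|\le C_\ell|B_1|R^{\log_2 C_\ell}$ directly by iterating the doubling inequality (tacitly using $C_\ell\ge 1$, which is automatic since $|B_2|\ge|B_1|\ge 1$), whereas the paper simply cites \cite[Proposition 1.2]{ChriRi}.
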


\begin{proof}
By \cite[Proposition 1.2]{ChriRi}, if $\ell$ has bounded doubling, then it exhibits polynomial volume growth. Hence, 
$\log |B_R|=O(\log R)=o(R^\beta)$ for every $\beta>0$. 
Proposition \ref{pro21} then implies that $G$ satisfies $(GRD)_{\beta,p}$ with respect to $\ell$ for every $p \in [1,\infty)$ and every $\beta > 0$. Consequently, by Corollary \ref{maincro}, the spectral triple $(F_r^p(G), \ell^p(G), D_\ell)$ is strongly $\beta$-metric for every $p\in[1,\infty)$ and every 
$0<\beta\le 1$.
\end{proof}

\begin{example}
Let $G$ be a finitely generated group of subexponential growth, equipped
with a word length $\ell$,  Proposition \ref{pro21} implies  $G$ satisfies
$(GRD)_{1,p}$ for every $p\in[1,\infty)$.
Consequently, Corollary \ref{maincro} implies
$(F_r^p(G),\ell^p(G),D_\ell)$ is strongly $1$-metric for every $p\in[1,\infty)$.
\end{example}

\begin{example}
Let $G$ be a countable discrete group equipped with a length function $\ell$.
Assume that $G$ has property subexponential decay property (see \cite{Sri}) with respect to $\ell$. Remark \ref{re34}
implies $G$ satisfies $(GRD)_{1,2}$. Consequently, Corollary \ref{maincro} implies
$(C_r^*(G),\ell^2(G),D_\ell)$ is strongly $1$-metric.
\end{example}

\begin{example}
Let $G$ be a countable discrete group equipped with a length function $\ell$. Assume that $G$ satisfies the usual rapid decay property with respect
to $\ell$ in the Hilbert space case $p=2$. 
Proposition \ref{pro21}  implies that $G$ satisfies $(GRD)_{\beta,2}$. Consequently, by
Corollary \ref{maincro}, $(F_r^2(G),\ell^2(G),D_\ell)$
is strongly $\beta$-metric for every $0<\beta\le 1$. In this case we have $F_r^2(G)=C_r^*(G)$.
Hence, the compact quantum metric structures obtained here are
consistent with earlier constructions on reduced group $C^*$-algebras due to
Antonescu-Christensen \cite{Anto}, Ozawa-Rieffel \cite{Ozawa}, and
Christensen-Rieffel \cite{ChriRi}.
\end{example}

\subsection*{Acknowledgement} The first author was supported by NSFC (No. 12061018). The second author was supported by NSFC (No. 12571135, 12171156), Key Laboratory of MEA (Ministry of Education), the Science and Technology Commission of Shanghai (No. 22DZ2229014), and Shanghai Key Laboratory of PMMP, East China Normal University.

\end{document}